\numberwithin{equation}{section}
\numberwithin{figure}{section}
\newtheorem{thm}{Theorem}[section]
\newtheorem{lemma}{Lemma}[section]
\newcommand{\commentout}[1]{{}} 
\newcommand{\abs}[1]{\left|#1\right|}
\newcommand{\norm}[1]{\left\|#1\right\|}
\newcommand{\bfb}{{\bf b}}
\newcommand{\bfc}{{\bf c}}
\newcommand{\bfn}{{\bf n}}
\newcommand{\bfdelta}{\boldsymbol{\delta}}
\newcommand{\bfgamma}{\boldsymbol{\gamma}}
\newcommand{\bfLambda}{\boldsymbol{\Lambda}}
\newcommand{\bfpsi}{\boldsymbol{\psi}}
\begin{document}
\title{Nonconforming Immersed Finite Element Spaces \\
For Elliptic Interface Problems}
\author{Ruchi Guo\thanks{Department of Mathematics, Virginia Tech, Blacksburg, VA 24061 (ruchi91@vt.edu) } 
\and Tao Lin\thanks{Department of Mathematics, Virginia Tech, Blacksburg, VA 24061 (tlin@math.vt.edu)} 
\and Xu Zhang\thanks{Department of Mathematics and Statistics, Mississippi State University, Mississippi State, MS 39762 (xuzhang@math.msstate.edu)}}
\date{}
\maketitle

\begin{abstract}
In this paper, we use a unified framework introduced in \cite{2016GuoLin} to study two classes of nonconforming immersed finite element (IFE) spaces with integral value degrees of freedom. The shape functions on interface elements are piecewise polynomials defined on sub-elements separated either by the actual interface or its line approximation. In this unified framework, we use the invertibility of the well known Sherman-Morison systems to prove the existence and uniqueness of shape functions on each interface element in either rectangular or triangular mesh. Furthermore, we develop a multi-edge expansion for piecewise functions and a group of identities for nonconforming IFE functions which enable us to show that these IFE spaces have the optimal approximation capability.
\end{abstract}

\section{Introduction}

Consider the classical second order elliptic interface problem:
\begin{align}
\label{eq1_1}
 -\nabla\cdot(\beta\nabla u)=f, \;\;\;\; & \text{in} \; \Omega^-  \cup \Omega^+, \\
 u=g, \;\;\;\; &\text{on} \; \partial\Omega,
\end{align}
where the domain $\Omega\subseteq \mathbb{R}^2$ is assumed to be separated by an interface curve $\Gamma$ into two subdomains $\Omega^+$ and $\Omega^-$. The diffusion coefficient $\beta(X)$ is a piecewise constant:
\begin{equation*}
\beta(X)=
\left\{\begin{array}{cc}
\beta^- & \text{if} \; X\in \Omega^- ,\\
\beta^+ & \text{if} \; X\in \Omega^+,
\end{array}\right.
\end{equation*}
and the exact solution $u$ is required to satisfy the jump conditions:
\begin{eqnarray}
[u]_{\Gamma} &=& 0, \label{eq1_3} \\
\big[\beta \nabla u\cdot \mathbf{n}\big]_{\Gamma} &=& 0, \label{eq1_4}
\end{eqnarray}
where $\mathbf{ n}$ is the unit normal vector to the interface $\Gamma$. Here and from now on, for every piecewise function $v$ defined as
\begin{equation*}
v=
\left\{\begin{array}{cc}
v^-(X) & \text{if} \; X\in \Omega^- , \\
v^+(X) & \text{if} \; X\in \Omega^+ ,
\end{array}\right.
\end{equation*}
we adopt the notation $[v]|_{\Gamma}=v^+|_{\Gamma}-v^-|_{\Gamma}$.

The IFE method was first introduced in \cite{1998Li} for solving an 1D elliptic interface problem with meshes independent of the interface. Extensions to 2D elliptic interface problems include IFE functions defined by conforming $P_1$ polynomials \cite{2007GongLiLi,2005HouLiu,2004LiLinLinRogers,2003LiLinWu}, conforming $Q_1$ polynomials \cite{2009HeTHESIS,2008HeLinLin, 2001LinLinRogersRyan,2012LinZhang}, nonconforming $P_1$ polynomials (Crouzeix-Raviart) \cite{2010KwakWeeChang}, and nonconforming rotated-$Q_1$ polynomials \cite{2013LinSheenZhang,2015LinSheenZhang,2013ZhangTHESIS}. IFE functions in these articles are $H^1$ functions defined with a line approximating the original interface curve
in each interface element. Recently, the authors in \cite{2016GuoLin,2015GuzmanSanchezSarkis} developed IFE spaces according to the original interface curve where the local degrees of freedom are of Lagrange type. The goal of the present article is to develop and analyze IFE spaces constructed according to the actual interface curve and the integral value degrees of freedom on element edges.

Two main features motivate us to consider IFE functions with integral value degrees of freedom and with the actual interface curve instead of its line approximation. First, as observed in \cite{2015LinSheenZhang,2013ZhangTHESIS}, IFE functions of this type have less severe discontinuity across interface edges because their continuity across an element edge is weakly enforced over the entire edge in an average sense. Compared to IFE spaces with Lagrange type of degrees of freedom \cite{2015LinLinZhang}, an IFE method based on integral value degrees of freedom does not necessarily require any penalty terms imposed over interface edges to ensure the derivation of optimal error bounds and better numerical performance. The second motivation is our desire to develop higher degree IFE spaces for which using a line to approximate the interface curve is
not sufficient anymore because of the $O(h^2)$ accuracy limitation for the line to approximate a curve. Even though the IFE functions in this article are still lower degree $P_1$ or rotated-$Q_1$
polynomials, we hope their investigation can become a precursor to the development of higher degree IFE spaces. In addition, we will demonstrate later that the framework presented here can also be applied to IFE spaces with rotated-$Q_1$ elements \cite{2015LinSheenZhang, 2013ZhangTHESIS} and Crouzeix-Raviart elements \cite{2010KwakWeeChang}, respectively, where IFE functions are defined according to a line approximating the interface curve in each interface element.

Even though the new IFE spaces presented here seem to be natural because they are constructed locally on each interface element according to the actual interface curve of the problem to be solved,
the related investigation faces a few hurdles. The first one is that the new IFE functions are discontinuous in each interface element except for trivial interface geometry because,
in general, two distinct polynomials cannot perfectly match each other on a curve. In contrast, almost all IFE spaces in the literature are continuous in each element. This lack of continuity leads to a lower regularity
of IFE functions in interface elements such that related error analysis demands new approaches different from those in the literature
\cite{2008HeLinLin, 2014JiChenLi, 2004LiLinLinRogers,1982Xu, 2013ZhangTHESIS}. Another issue is that the interpolation error analysis technique based on
the multi-point Taylor expansion in the literature is not applicable here because new IFE functions are constructed with integral value degrees of freedom instead of the Lagrange type degrees of freedom.

This article is organized as follows. In Section 2, we introduce some basic notations, assumptions and known results to be used in this article. In Section 3, we extend the multi-point Taylor expansion established in \cite{2016GuoLin,2009HeTHESIS,2004LiLinLinRogers,2013ZhangTHESIS} to a multi-edge expansion for piecewise $C^2$ functions such that the new expansion can handle integral value degrees of freedom. Estimates for remainders in this new expansion are also developed in this section. In Section 4, we show that the integral value degrees of freedom imposed on each edge and the approximated jump conditions together yield a Sherman-Morrison system for determining coefficients in IFE shape functions
on interface elements. We show that the unisolvence and boundedness of IFE spaces follows from the well-known invertibility of the Sherman-Morrison system.
A group of fundamental identities such as partition of unity are also derived for new IFE shape functions. In Section 5, we establish the optimal approximation capability for IFE spaces with the integral degrees of freedom defined either according to the actual interface or to a line approximating the interface curve \cite{2010KwakWeeChang,2013ZhangTHESIS}. Finally, in Section 6, we present some numerical examples.

\section{Preliminaries}
\label{sec:preliminaries}
Throughout this article, we adopt the notations used in \cite{2016GuoLin}, and we recall some of them for reader's convenience. We assume that $\Omega\subset\mathbb{R}^2$ is a bounded domain that is a union of finitely many rectangles, and that $\Omega$ is separated by an interface curve $\Gamma$  into two subdomains $\Omega^+$ and $\Omega^-$ such that $\overline{\Omega} = \overline{\Omega^+} \cup \overline{\Omega^-} \cup \Gamma$. For any measurable subset $\tilde \Omega \subseteq \Omega$, we define the standard Sobolev spaces $W^{k,p}(\tilde \Omega)$ and the associated norm, $\|\cdot\|_{k,p,\tilde \Omega}$ and semi-norm, $|v|_{k,p,\tilde \Omega}=\|D^{\alpha}v\|_{0,p,\tilde \Omega}$, for $|\alpha|=k$. The corresponding Hilbert space is $H^k(\tilde \Omega)=W^{k,2}(\tilde \Omega)$. When
$\tilde \Omega^s = \tilde \Omega \cap \Omega^s \not = \emptyset, s = \pm$, we let
\begin{equation*}
PH^k_{int}(\tilde \Omega)=\{ u: u|_{\tilde{\Omega}^s}\in H^k(\tilde \Omega^s),\; s=\pm; \; [u]=0 \; \text{and} \; [\beta\nabla u\cdot \mathbf{n}_\Gamma]=0\; \text{on}\; \Gamma \cap \tilde \Omega \},
\end{equation*}
\begin{equation*}
PC^k_{int}(\tilde \Omega)=\{ u: u|_{\tilde \Omega^s}\in C^k(\tilde \Omega^s),\; s=\pm; \; [u]=0 \; \text{and} \; [\beta\nabla u\cdot \mathbf{n}_{\Gamma}]=0\; \text{on}\; \Gamma \cap \tilde \Omega \},
\end{equation*}
with the associated norms:
\begin{equation*}
\|\cdot\|^2_{k,\tilde \Omega}=\|\cdot\|^2_{k,\tilde \Omega^+}+\|\cdot\|^2_{k,\tilde \Omega^-}, \;\;\;\;\; |\cdot|^2_{k,\tilde \Omega}=|\cdot|^2_{k,\tilde \Omega^+}+|\cdot|^2_{k,\tilde \Omega^-},
\end{equation*}
\begin{equation*}
\|\cdot\|_{k,\infty,\tilde \Omega}=\max(\|\cdot\|_{k,\infty,\tilde \Omega^+} \;,\; \|\cdot\|_{k,\infty,\tilde \Omega^-}), \;\;\;\;\; |\cdot|_{k,\infty,\tilde \Omega}=\max(|\cdot|_{k,\infty,\tilde \Omega^+} \;,\; |\cdot|_{k,\infty,\tilde \Omega^-}).
\end{equation*}

Let $\mathcal{T}_h$ be a Cartesian triangular or rectangular mesh of the domain $\Omega$ with the maximum length of edge $h$. An element $T\in \mathcal{T}_h$ is called an interface element provided the interior of $T$ intersects with the interface $\Gamma$; otherwise, we name it a non-interface element. We let $\mathcal{T}^i_h$ and $\mathcal{T}^n_h$ be the set of interface elements and non-interface elements, respectively. Similarly, $\mathcal{E}^i_h$ and $\mathcal{E}^n_h$ are sets of interface edges and non-interface edges, respectively. Besides, we assume that $\mathcal{T}_h$ satisfies the following hypotheses \cite{2009HeLinLin}, when the mesh size $h$ is small enough:

\begin{itemize}[leftmargin=30pt]
  \item [\textbf{(H1)}] The interface $\Gamma$ cannot intersect an edge of any element at more than two points unless the edge is part of $\Gamma$. \label{assump_h1}
  \item [\textbf{(H2)}] If $\Gamma$ intersects the boundary of an element at two points, these intersection points must be on different edges of this element. \label{assump_h2}
  \item [\textbf{(H3)}] The interface $\Gamma$ is a piecewise $C^2$ function, and the mesh $\mathcal{T}_h$ is formed such that the subset of $\Gamma$ in every interface element $T\in\mathcal{T}^i_h$ is $C^2$- continuous. \label{assump_h3}
  \item[\textbf{(H4)}] The interface $\Gamma$ is smooth enough so that $PC^2_{int}(T)$ is dense in $PH^2_{int}(T)$ for every interface element $T\in\mathcal{T}^i_h$. \label{assump_h4}
\end{itemize}

On an element $T \in \mathcal{T}_h$, we consider the local finite element space $(T, \Pi_T, \Sigma_T)$ with
\begin{eqnarray}
\Pi_T &=& \begin{cases}
\textrm{Span} \{ 1,x,y\}, & \text{for Crouzeix-Raviart (C-R) finite element functions}, \\
\textrm{Span} \{ 1,x,y,x^2-y^2 \}, & \text{for rotated-$Q_1$ finite element functions},
\end{cases} \label{fe_cases}\\
\Sigma_T &=& \left\{ \frac{1}{|b_i|}\int_{b_i}\psi_T(X)ds: i \in \mathcal{I}, \; \forall \psi_T \in \Pi_T \right\}, \label{integral_DOF}
\end{eqnarray}
where $b_i, i\in \mathcal{I}$ are edges of the element $T$, $\mathcal{I} = \{1, 2, \cdots, DOF(T)\}$, $DOF(T) = 3$ or $4$ depending on whether $T$ is triangular or rectangular. 
In addition, let $M_i$ be the midpoint of the edge $b_i$, $i\in\mathcal{I}$. Recall from \cite{1992RanacherTurek}
that 
$(T, \Pi_T, \Sigma_T)$ has a set of shape functions $\psi_{i,T}, i\in \mathcal{I}$ such that
\begin{equation}
\label{eqn_psi_iT_values_bounds}
\frac{1}{|b_j|}\int_{|b_j|}\psi_{i,T}(X)ds=\delta_{ij},~~\norm{\psi_{i,T}}_{\infty, T} \leq C,~~\norm{\nabla \psi_{i,T}}_{\infty, T} \leq Ch^{-1}, \;\; i,j\in \mathcal{I},
\end{equation}
where $\delta_{ij}$ is the \textit{Kronecker} delta function.

Furthermore, we let $\rho=\beta^-/\beta^+$, and, on any $T\in \mathcal{T}^i_h$, we use $D$, $E$ to denote the intersection points of $\Gamma$ and $\partial T$ and let $l$ be the line connecting $D$ and $E$. Let $\bar{\mathbf{ n}}=(\bar{n}_x,\bar{n}_y)^t$ and $\mathbf{ n}(\widetilde{X})=(\tilde{n}_x(\widetilde{X}),\tilde{n}_y(\widetilde{X}))^t$ be the normal vector to $l$ and to $\Gamma$ at $\widetilde{X}\in\Gamma$, respectively. In the following discussion, $s$ is the index that is either - or +, and $s'$ takes the opposite sign whenever a formula have them both. And let $F$ be an arbitrary point either on the line $l$ or the interface curve $\Gamma\cap T$. We associate the point $F$ with a vector $\mathbf{v}(F)=(v_x(F),v_y(F))^t$ such that the following two cases will be considered:
\begin{itemize}
\label{partition_case1}\item[1] If $F\in\Gamma\cap T$ but $F\neq D,E$, then $\mathbf{ v}(F)=\mathbf{ n}(F)$ and $T$ is partitioned by $\Gamma$ into two subelements $T^s_{curve}=T^s$, $s=\pm$.

\label{partition_case2}\item[2] If $F\in l$, then let $\mathbf{ v}(F)=\bar{\mathbf{ n}}$ and $T$ is partitioned by $l$ into two subelements $T^s_{line}$, $s=\pm$.
\end{itemize}

Lemma 3.1 and Remark 3.1 in \cite{2016GuoLin} provide one critical ingredient in our analysis: on a mesh fine enough, there exists a constant $C$ such that
\begin{equation}
\label{estimate_nn}
\mathbf{ v}(F)\cdot\bar{\mathbf{ n}}\geqslant1-Ch^2.
\end{equation}
As in \cite{2016GuoLin}, we will employ the following matrices:
\begin{equation}
\label{mat_curve}
M^s(\widetilde{X})=
\left(\begin{array}{cc}
\tilde{n}^2_y(\widetilde{X})+\beta^s/\beta^{s'} \tilde{n}^2_x(\widetilde{X}) & (\beta^s/\beta^{s'}-1)\tilde{n}_x(\widetilde{X})\tilde{n}_y(\widetilde{X}) \\
(\beta^s/\beta^{s'}-1)\tilde{n}_x(\widetilde{X})\tilde{n}_y(\widetilde{X}) & \tilde{n}^2_x(\widetilde{X})+\beta^s/\beta^{s'} \tilde{n}^2_y(\widetilde{X})
\end{array}\right),
\end{equation}
\begin{equation}
\begin{split}
\label{mat_line}
\overline{M}^s(F)
&=\frac{1}{\bar{\mathbf{ n}}\cdot\mathbf{ n}(F)}
\left(\begin{array}{cc}
\bar{n}_yn_y(F)+\beta^s/\beta^{s'}\bar{n}_xv_x(F) & -\bar{n}_xv_y(F)+\beta^s/\beta^{s'}\bar{n}_xv_y(F) \\
-\bar{n}_yn_x(F)+\beta^s/\beta^{s'}\bar{n}_yv_x(F) & \bar{n}_xv_x(F)+\beta^s/\beta^{s'}\bar{n}_yv_y(F)
\end{array}\right),
\end{split}
\end{equation}
where $s=\pm$ and $\overline{M}^s(F)$ is well defined since \eqref{estimate_nn} implies that $\bar{\mathbf{ n}}\cdot\mathbf{ n}(F)>0$ when $h$ is small enough.

\section{Multi-edge Taylor Expansions on Interface Elements}
In this section, we derive a multi-edge expansion for a function $u$ on an interface element to handle integral degrees of freedom. We will show that the integral value $\frac{1}{|b_i|}\int_{b_i}u(X)ds$, $i \in \mathcal{I}$ can be expressed in terms of $u$ and its derivatives for various configurations of the interface and edges. Estimates for the remainders of this expansion will be given.


We partition the index set $\mathcal{I}$ into three subsets $\mathcal{I}^-=\{ i: b_i\subseteq \overline{T^-} \}$, $\mathcal{I}^+=\{ i:  b_i\subseteq \overline{T^+} \}$ and $\mathcal{I}^{int}=\{i: b_i\cap T^s \neq \emptyset, s=\pm \}$. Given an edge $b_i$, for every point $P\in b_i$ and $X\in T$, we note that $Y_i(t,P,X)=tP+(1-t)X, t \in [0, 1]$ is a point on the line segment connecting $P$ and $X$. We note that for some points $X$ and $P$, the line $\overline{PX}$ may intersect the curve $\Gamma\cap T$ at multiple points. Define
$$
T_{int}=\{ X\in T: \textrm{there exists a point }Y \in T\cap \Gamma \textrm{, such that } \overline{XY} \textrm{ is a tangent line to }\Gamma\textrm{ at } Y \}
$$
which is actually formed by the line segments inside $T$ each of which is tangent to $T\cap\Gamma$ at an end point $Y$. Lemma 3.1 in \cite{2016GuoLin} shows that  $|T_{int}|<Ch^3$ for a mesh fine enough.

First we derive the multi-edge expansion for  a point $X\in T_{non} = T\setminus T_{int}$. For convenience, we define $T^s_{non}=T_{non}\cap T^s$. We note that for
any $P\in\partial T$, the line segment $\overline{PX}$ intersects with $\Gamma\cap T$ either at no point or at just one point. In the second case, $X$ and $P$ sit on different sides
of $\Gamma\cap T$ and we denote the intersection point by $\widetilde{Y}_i=Y_i(\tilde{t},P,X)$ for a $\tilde{t}_i=\tilde{t}_i(P,X)\in [0,1]$.
Consider a piecewisely defined function $R_i \,:\, b_i\times T_{non} \rightarrow \mathbb{R}$ given by
\begin{equation}
\label{reminder_1}
R_i(P,X) = \left\{
\begin{aligned}
&\int_0^1(1-t)\frac{d^2}{dt^2}u^s(Y_i(t,P,X))dt, & \text{if} ~ P\in T^s\cap b_i, ~X \in T_{non}^s, \\
&R_{i1}(P,X)+R_{i2}(P,X)+R_{i3}(P,X), & \text{if} ~ P\in T^{s'}\cap b_i, ~X \in T_{non}^s,
\end{aligned}
\right.
\end{equation}
where
\begin{equation}
\begin{cases}
\label{reminder_2}
&R_{i1}(P,X)=\int_{0}^{\tilde{t}_i}(1-t)\frac{d^2u^s}{dt^2}(Y_i(t,P,X))dt, \\
&R_{i2}(P,X)=\int_{\tilde{t}_i}^{1}(1-t)\frac{d^2u^{s'}}{dt^2}(Y_i(t,P,X))dt,\\
&R_{i3}(P,X)=(1-\tilde{t}_i)\int_{0}^{\tilde{t}_i}\frac{d}{dt} \left( (M^s(\widetilde{Y}_i)-I)\nabla u^s(Y_i(t,X))\cdot (P-X) \right)dt.
\end{cases}
\end{equation}

For $u\in PC^2_{int}(T)$, recall the multi-point Taylor expansion formulation and the estimates of \eqref{reminder_1} and \eqref{reminder_2} in \cite{2016GuoLin}:
\begin{equation}
u^s(P)=u^s(X)+ \nabla u^s(X)\cdot(P-X) + R_i(P,X), ~ \text{if} \; P\in T^s\cap b_i, \; X \in T^s_{non},
 \label{eq_expan_same_side}
\end{equation}
\begin{equation}
\begin{split}
\label{eq_expan_diff_side}
u^{s'}(P)=&u^{s}(X)+\nabla u^{s}(X)\cdot (P-X)+ \left(\left( M^{s}(\widetilde{Y}_i)-I \right)\nabla u^{s}(X)\right) \cdot (P-\widetilde{Y}_i)\\
& + R_i(P,X), ~\text{if} \; P\in T^{s'}\cap b_i, \; X \in T^{s}_{non},
\end{split}
\end{equation}
and for any fixed $P\in b_i$,
\begin{equation}
\label{reminder_estimates}
\norm{R_i(P,\cdot)}_{0,T_{non}^s}\leqslant Ch^2|u|_{2,T}, ~ s=\pm, ~\forall i\in\mathcal{I}.
\end{equation}

Integrating \eqref{eq_expan_same_side} and \eqref{eq_expan_diff_side} on each edge $b_i$ with respect to $P$, we obtain the following
multi-edge expansion for $u^s(X)$ with $X \in T_{non}^s$:
\begin{equation}
\label{eq_integral_expan_1}
\frac{1}{|b_i|}\int_{b_i}u^s(P)ds(P)=u^s(X)+ \nabla u^s(X)\cdot(M_i-X) + \mathcal{R}_i(X), ~ i\in \mathcal{I}^s,
\end{equation}
\begin{equation}
\label{eq_integral_expan_2}
\begin{split}
\frac{1}{|b_i|}\int_{b_i}u^{s'}(P)ds(P)=&u^{s}(X)+\nabla u^{s}(X)\cdot (M_i-X) + \mathcal{R}_i(X) \\
&+\frac{1}{|b_i|}\int_{b_i}  \left(\left( M^{s}(\widetilde{Y}_i)-I \right)\nabla u^{s}(X)\right) \cdot (P-\widetilde{Y}_i) ds(P), ~ i\in \mathcal{I}^{s'},
\end{split}
\end{equation}
\begin{equation}
\label{eq_integral_expan_3}
\begin{split}
\frac{1}{|b_i|}\int_{b_i}u(P)ds(P)=&u^{s}(X)+\nabla u^{s}(X)\cdot (M_i-X) + \mathcal{R}_i(X) \\
&+\frac{1}{|b_i|}\int_{b_i\cap T^{s'}}  \left(\left( M^{s}(\widetilde{Y}_i)-I \right)\nabla u^{s}(X)\right) \cdot (P-\widetilde{Y}_i) ds(P), ~ i\in \mathcal{I}^{int},
\end{split}
\end{equation}
where
\begin{equation}
\label{reminder_integral}
\begin{aligned}
& \mathcal{R}_i(X)=\frac{1}{|b_i|}\int_{b_i}R_i(P,X)ds(P).
\end{aligned}
\end{equation}

Now we estimate these reminders $\mathcal{R}_i$.
\begin{lemma}
\label{reminder_estimate}
Assume $u\in PC^2_{int}(T)$, then there exists a constant $C$ independent of the interface location, such that
\begin{equation}
\label{integral_reminder_estimates}
\norm{\mathcal{R}_i}_{0,T_{non}^s}\leqslant Ch^2 |u|_{2,T}, ~ s=\pm,~ \forall i\in\mathcal{I}.
\end{equation}
\end{lemma}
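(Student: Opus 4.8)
The plan is to reduce the estimate \eqref{integral_reminder_estimates} on the averaged remainder $\mathcal{R}_i$ to the slice-wise bound \eqref{reminder_estimates} already recorded from \cite{2016GuoLin}. By the definition \eqref{reminder_integral}, $\mathcal{R}_i(X)$ is nothing but the mean value over $b_i$ of the map $P\mapsto R_i(P,X)$, so one expects $\mathcal{R}_i$ to inherit any bound that holds uniformly over the slices $R_i(P,\cdot)$.

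Concretely, I would fix $s\in\{+,-\}$ and $i\in\mathcal{I}$ and regard $R_i$ as a function on the product $b_i\times T_{non}^s$. For each $X\in T_{non}^s$, Cauchy--Schwarz (equivalently Jensen's inequality for the probability measure $ds(P)/|b_i|$ on $b_i$) gives
\begin{equation*}
\abs{\mathcal{R}_i(X)}^2=\abs{\frac{1}{|b_i|}\int_{b_i}R_i(P,X)\,ds(P)}^2\leqslant\frac{1}{|b_i|}\int_{b_i}\abs{R_i(P,X)}^2\,ds(P).
\end{equation*}
Integrating this inequality over $X\in T_{non}^s$ and applying Tonelli's theorem to the nonnegative integrand to exchange the order of integration,
\begin{equation*}
\norm{\mathcal{R}_i}^2_{0,T_{non}^s}\leqslant\frac{1}{|b_i|}\int_{b_i}\left(\int_{T_{non}^s}\abs{R_i(P,X)}^2\,dX\right)ds(P)=\frac{1}{|b_i|}\int_{b_i}\norm{R_i(P,\cdot)}^2_{0,T_{non}^s}\,ds(P).
\end{equation*}
Now I would invoke \eqref{reminder_estimates}, which asserts that for each fixed $P\in b_i$ one has $\norm{R_i(P,\cdot)}_{0,T_{non}^s}\leqslant Ch^2|u|_{2,T}$ with a constant $C$ that is uniform in $P\in b_i$ and independent of the interface location. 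Since the resulting integrand no longer depends on $P$,
\begin{equation*}
\norm{\mathcal{R}_i}^2_{0,T_{non}^s}\leqslant\frac{1}{|b_i|}\int_{b_i}C^2h^4|u|^2_{2,T}\,ds(P)=C^2h^4|u|^2_{2,T},
\end{equation*}
and taking square roots yields \eqref{integral_reminder_estimates}. As there are at most four edges, replacing $C$ by the largest of the finitely many constants produces one $C$ valid for all $i\in\mathcal{I}$ and both $s=\pm$.

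The step that I expect to demand the most attention is justifying the two ingredients used above. First, the uniformity in $P\in b_i$ of the constant in \eqref{reminder_estimates}: the multi-point Taylor remainder bound must hold with one $C$ along the whole edge, which in turn relies on the entries of $M^s(\widetilde{Y}_i)-I$ being bounded uniformly in $\widetilde{Y}_i$ by a quantity depending only on $\rho=\beta^-/\beta^+$ and not on $h$ or the interface position. Second, the measurability of $(P,X)\mapsto R_i(P,X)$ on $b_i\times T_{non}^s$, needed for Tonelli's theorem; this follows from the explicit formulas \eqref{reminder_1}--\eqref{reminder_2}, since on each of the regions $\{P\in T^s\}$ and $\{P\in T^{s'}\}$, $R_i$ is assembled from line integrals of the bounded functions $D^2u^{\pm}$ along $\overline{PX}$, the entrywise-bounded factor $M^s(\widetilde{Y}_i)-I$, and the affine term $P-\widetilde{Y}_i$, all depending continuously — hence measurably — on $(P,X)$ once the measure-zero tangency set is removed (which is precisely why the estimate is posed on $T_{non}$ rather than on all of $T$). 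Beyond these bookkeeping points the argument is the routine Cauchy--Schwarz/Tonelli estimate above, and no density argument is required since the hypothesis already assumes $u\in PC^2_{int}(T)$.
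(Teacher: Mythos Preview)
Your proof is correct and follows essentially the same route as the paper: reduce $\mathcal{R}_i$ to the slice-wise bound \eqref{reminder_estimates} via an integral inequality. The only cosmetic difference is that the paper invokes Minkowski's integral inequality directly to obtain $\norm{\mathcal{R}_i}_{0,T_{non}^s}\leqslant\frac{1}{|b_i|}\int_{b_i}\norm{R_i(P,\cdot)}_{0,T_{non}^s}\,ds(P)$, whereas you achieve the same endpoint through Jensen/Cauchy--Schwarz followed by Tonelli; both arguments collapse to the identical final estimate once the uniform-in-$P$ bound \eqref{reminder_estimates} is inserted.
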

\begin{proof}
By the estimates \eqref{reminder_estimates} and Minkowski inequality, we have
\begin{equation*}
\begin{split}
\norm{\mathcal{R}_i}_{0,T_{non}^s}=&\left( \int_{T_{non}^s} \left( \frac{1}{|b_i|}\int_{b_i}R_i(P,X)ds(P) \right)^2 dX \right)^{\frac{1}{2}} \\
\leqslant & \frac{1}{|b_i|} \int_{b_i} \left( \int_{T_{non}^s} (R_i(P,X))^2 dX \right)^{\frac{1}{2}} ds(P) \\
\leqslant & \frac{Ch^2}{|b_i|} \int_{b_i} |u|_{2,T} \; ds(P) \leqslant Ch^2|u|_{2,T}.
\end{split}
\end{equation*}
~~~\end{proof}

We now consider the multi-edge expansion for $X\in T_{int}$. We start from the  following first order multi-point Taylor expansion:
\begin{equation}
\begin{split}
\label{1st_expan_1}
u^s(P)=&u^s(X)+R_i(P,X),~~~~~ P\in b_i, ~X\in T_{int}
\end{split}
\end{equation}
where $R_i:b_i\times T_{int}\to\mathbb{R}$ is a function defined by
\begin{equation}
\begin{split}
\label{1st_expan_2}
R_i(P,X)=\int^1_0\frac{d u}{dt}(Y_i(t,P,X))dt=\int^1_0 \nabla u(Y_i(t,P,X))\cdot(P-X) dt
\end{split}
\end{equation}
Integrating \eqref{1st_expan_2} on each edge $b_i$ with respect to $P$, we obtain the following multi-edge expansion:
\begin{equation}
\label{1st_multiedge_1}
\frac{1}{|b_i|}\int_{b_i}u(P)ds(P)=u(X)+\mathcal{R}_i(X), ~~ i\in\mathcal{I},~~~~~ \textrm{where }~~ \mathcal{R}_i(X)=\frac{1}{|b_i|}\int_{b_i}R_i(P,X)ds(P).
\end{equation}

The following lemma will be used to estimate the reminders in this expansion.
\begin{lemma}
\label{lem_estim_Tint1}
There exists a constant $C$ independent of the interface location such that for fixed $t$
\begin{equation}
\label{lem_estim_Tint1_eq1}
\| R_i(P,\cdot) \|_{0,T_{int}}\leqslant Ch^2 \| u \|_{1,6,T}
\end{equation}
\end{lemma}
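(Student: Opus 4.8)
The plan is to reduce the estimate to a scaled change of variables along the segments $Y_i(t,P,X)=tP+(1-t)X$, and to cash in the fact, recalled from Lemma 3.1 of \cite{2016GuoLin}, that $|T_{int}|\leqslant Ch^3$. Throughout, $P\in b_i$ is fixed and $\nabla u$ is understood piecewise; note that $[u]_\Gamma=0$ makes $t\mapsto u(Y_i(t,P,X))$ continuous, so \eqref{1st_expan_2} is meaningful even when $\overline{PX}$ meets $\Gamma$. First I would use $|P-X|\leqslant\textrm{diam}(T)\leqslant Ch$ together with the Cauchy--Schwarz inequality in $t$ applied to \eqref{1st_expan_2}:
\[
|R_i(P,X)|^2\leqslant Ch^2\Big(\int_0^1|\nabla u(Y_i(t,P,X))|\,dt\Big)^2\leqslant Ch^2\int_0^1|\nabla u(Y_i(t,P,X))|^2\,dt,
\]
and then integrate in $X$ over $T_{int}$ and swap the order of integration by Fubini, so the task becomes bounding $g(t):=\int_{T_{int}}|\nabla u(Y_i(t,P,X))|^2\,dX$ for each fixed $t\in[0,1)$ and checking that $\int_0^1 g(t)\,dt<\infty$.

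For fixed $t$, I would apply H\"older's inequality with exponents $3$ and $3/2$ and then change variables $Z=tP+(1-t)X$. This map is affine with constant Jacobian $(1-t)^2$ and, since $T$ is convex and $P\in\partial T$, it carries $T_{int}\subseteq T$ into $T$; hence
\[
g(t)\leqslant|T_{int}|^{2/3}\Big(\int_{T_{int}}|\nabla u(Y_i(t,P,X))|^6\,dX\Big)^{1/3}=|T_{int}|^{2/3}(1-t)^{-2/3}\Big(\int_{tP+(1-t)T_{int}}|\nabla u(Z)|^6\,dZ\Big)^{1/3},
\]
and the last integral is at most $|u|_{1,6,T}^6$. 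Together with $|T_{int}|\leqslant Ch^3$ this gives $g(t)\leqslant Ch^2(1-t)^{-2/3}|u|_{1,6,T}^2$.

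Finally, since $\int_0^1(1-t)^{-2/3}\,dt=3<\infty$, assembling the two steps yields
\[
\|R_i(P,\cdot)\|_{0,T_{int}}^2\leqslant Ch^2\int_0^1 g(t)\,dt\leqslant Ch^4|u|_{1,6,T}^2,
\]
that is, $\|R_i(P,\cdot)\|_{0,T_{int}}\leqslant Ch^2|u|_{1,6,T}\leqslant Ch^2\|u\|_{1,6,T}$. I expect the one delicate point to be the degeneration of the change of variables as $t\to1$: the Jacobian factor $(1-t)^{-2}$ it produces is not integrable in $t$, and it is precisely the use of the $W^{1,6}$ norm (rather than $W^{1,2}$) that, after taking the cube root in H\"older's inequality, softens this to the integrable $(1-t)^{-2/3}$, while the companion H\"older exponent $3/2$ is exactly what converts the $O(h^3)$ measure of $T_{int}$ into the required $O(h^2)$.
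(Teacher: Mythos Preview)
Your proof is correct and follows essentially the same route as the paper: both exploit the affine change of variables $Z=tP+(1-t)X$ with Jacobian $(1-t)^2$, use H\"older's inequality to pass from $L^2$ to $L^6$ so that the $O(h^3)$ measure of $T_{int}$ supplies the missing power of $h$, and then check that the resulting power of $(1-t)$ is integrable on $[0,1]$. The only cosmetic differences are that the paper applies Minkowski's integral inequality in $t$ (yielding $(1-t)^{-1/3}$) where you use Cauchy--Schwarz in $t$ followed by Fubini (yielding $(1-t)^{-2/3}$), and the paper changes variables \emph{before} applying H\"older on the image set $\hat T_{int}(t)$ whereas you apply H\"older on $T_{int}$ first; both orderings give the same final bound.
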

\begin{proof}
We consider the linear mapping $\xi: X \rightarrow \hat{X}=tP+(1-t)X$ for $t \in [0, 1]$ and $P\in b_i$ which maps
$T_{int}$ to
$$
\hat{T}_{int}(t)=\{ tP+(1-t)X : X\in T_{int} \}.
$$
Since $\xi$ is a linear mapping, $|\hat{T}_{int}(t)|=(1-t)^2|T_{int}|\leqslant C(1-t)^2h^3$. Now by H\"older's inequality and following the similar idea in \cite{1998ChenZou}, we have
\begin{equation*}
\begin{split}
\left( \int_{T_{int}} \left( \nabla u(Y_i)\cdot (P-X) \right)^2 dX \right)^{1/2} & \leqslant h \left( \int_{T_{int}} \abs{\nabla u(Y_i(t,P,X))}^2 dX \right)^{1/2} \\
&=(1-t)^{-1}h \left( \int_{\hat{T}_{int}} \abs{\nabla u(\hat{X})}^2 d\hat{X} \right)^{1/2} \\
&\leqslant (1-t)^{-1}h \left( \int_{\hat{T}_{int}}1^{3/2} d\hat{X} \right)^{1/3} \left( \int_{\hat{T}_{int}}|\nabla u(\hat{X})|^6 d\hat{X} \right)^{1/6}\\
&\leqslant (1-t)^{-1/3}h^2 \| u \|_{1,6,T}.
\end{split}
\end{equation*}
Then by Minkovski's inequality and the estimate above, we have
\begin{equation*}
\begin{split}
\| R_i(P,\cdot) \|_{0,T_{int}}= &\left(  \int_{T_{int}} \left( \int^1_0 \nabla u(Y_i)\cdot(P-X) dt \right)^2   dX  \right)^{1/2}\\
\leqslant&  \int^1_{0} \left( \int_{T_{int}} \left( \nabla u(Y_i)\cdot(P-X)\right)^2 dX \right)^{1/2} dt\\
\leqslant& h^2 \|u\|_{1,6,T} \int^1_0 (1-t)^{-1/3} dt =\frac{3}{2}h^2 \|u\|_{1,6,T}
\end{split}
\end{equation*}
which completes the proof.
\end{proof}

Finally, we give estimates for the remainder in the multi-edge expansion \eqref{1st_multiedge_1} in the following lemma.
\begin{lemma}
There exists a constant $C$ independent of the interface location such that 
\begin{equation}
\label{1st_multiedge_est}
\| \mathcal{R}_i \|_{0,T_{int}}\leqslant Ch^2 \|u\|_{1,6,T},~i \in \mathcal{I}.
\end{equation}
\end{lemma}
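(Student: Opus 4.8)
The plan is to reduce the bound on $\mathcal{R}_i = \frac{1}{|b_i|}\int_{b_i} R_i(P,X)\,ds(P)$ directly to the fixed-$t$-and-$P$ estimate already furnished by Lemma~\ref{lem_estim_Tint1}, using exactly the same Minkowski-inequality argument that was used for the non-interface remainder in Lemma~\ref{reminder_estimate}. Concretely, I would write
$$
\|\mathcal{R}_i\|_{0,T_{int}} = \left( \int_{T_{int}} \left( \frac{1}{|b_i|}\int_{b_i} R_i(P,X)\,ds(P) \right)^2 dX \right)^{1/2}
$$
and then apply the integral (Minkowski) inequality to pull the $L^2_X$ norm inside the edge integral:
$$
\|\mathcal{R}_i\|_{0,T_{int}} \leqslant \frac{1}{|b_i|} \int_{b_i} \left( \int_{T_{int}} (R_i(P,X))^2\,dX \right)^{1/2} ds(P) = \frac{1}{|b_i|}\int_{b_i} \|R_i(P,\cdot)\|_{0,T_{int}}\,ds(P).
$$

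The second step is to invoke Lemma~\ref{lem_estim_Tint1}, which gives $\|R_i(P,\cdot)\|_{0,T_{int}} \leqslant Ch^2\|u\|_{1,6,T}$ with $C$ independent of the interface location and, crucially, of $P$. Substituting this uniform bound into the edge integral and using $\frac{1}{|b_i|}\int_{b_i} ds(P) = 1$ yields $\|\mathcal{R}_i\|_{0,T_{int}} \leqslant Ch^2\|u\|_{1,6,T}$ for every $i\in\mathcal{I}$, which is the claimed estimate.

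One subtlety worth flagging: Lemma~\ref{lem_estim_Tint1} is stated "for fixed $t$," but the definition \eqref{1st_expan_2} of $R_i(P,X)$ already contains the integral $\int_0^1(\cdots)\,dt$, and the proof of Lemma~\ref{lem_estim_Tint1} in fact carries that $t$-integration out internally (ending at $\tfrac{3}{2}h^2\|u\|_{1,6,T}$); so the quantity $\|R_i(P,\cdot)\|_{0,T_{int}}$ appearing here is exactly the one bounded there, and no further $t$-integration is needed at this stage. I expect essentially no obstacle — the only thing to be careful about is confirming that the constant in Lemma~\ref{lem_estim_Tint1} is genuinely uniform in $P\in b_i$ (it is, since $|P-X|\leqslant Ch$ was the only place $P$ entered, and the bound on $|\hat T_{int}(t)|$ does not see $P$), so that pulling it out of the edge average is legitimate.
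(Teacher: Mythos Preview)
Your proposal is correct and matches the paper's own proof exactly: the paper simply says the result follows from Lemma~\ref{lem_estim_Tint1} together with the Minkowski argument used in Lemma~\ref{reminder_estimate}, which is precisely the two-step reduction you wrote out. Your remark about the ``for fixed $t$'' phrasing is also on point---the $t$-integration is indeed already absorbed in Lemma~\ref{lem_estim_Tint1}, and the constant there is uniform in $P$, so the edge average goes through without issue.
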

\begin{proof}
The results follow from Lemma \ref{lem_estim_Tint1} and arguments similar to those used in the proof for Lemma \ref{reminder_estimate}.
\end{proof}

\section{IFE Spaces and Their Properties}
\label{sec:IFE Spaces and Their Properties}

In this section, we use the finite element space $(T,\Pi_T,\Sigma_T)$ for $T \in \mathcal{T}_h$ to develop the nonconforming $P_1$ and rotated $Q_1$ IFE spaces with integral degrees of freedom. First we prove the unisolvence of the immersed finite elements on interface elements by the invertibility of the Sherman-Morison system. Then we present a few properties of IFE spaces which play important roles in the analysis of approximation capabilities. We note the framework presented here provides a unified approach for both nonconforming $P_1$ and rotated $Q_1$ IFE spaces developed in the literature \cite{2010KwakWeeChang,2015LinSheenZhang,2013ZhangTHESIS} and the new ones defined with the actual interface curve.

\subsection{Construction of IFE Spaces}

First, on every element $T\in \mathcal{T}_h$, we have the standard local finite element space
\begin{equation}
\label{stand_fe_loc}
S_h^{non}(T)= \textrm{Span} \{ \psi_{i,T}:\; i\in\mathcal{I} \},
\end{equation}
where $\psi_{i,T}, i\in\mathcal{I}$ are the shape functions satisfying \eqref{eqn_psi_iT_values_bounds}. Naturally \eqref{stand_fe_loc} can be used as the local IFE space on each non-interface element $T \in \mathcal{T}_h^n$. So we focus on constructing the local IFE space on every interface element.
\commentout{
Specifically, we will show that the IFE shape functions exist and can be uniquely determined by the integral degrees of freedom $\Sigma_T$ under some suitable interface conditions. And these shape functions then can span the local IFE space on each interface element.
}

The main task is to construct IFE shape functions on interface elements. We consider the IFE functions in the following form of piecewise polynomials
\begin{equation}
\label{ife_piecewise}
\phi_T(X) =
\left\{
\begin{aligned}
\phi^{-}_T(X)\in \Pi_T \;\;\;\;\; & \text{if} \;\; X\in T_{p}^-, \\
\phi^{+}_T(X)\in \Pi_T \;\;\;\;\; & \text{if} \;\; X\in T_{p}^+,
\end{aligned}
\right.
\end{equation}
where $p=curve$ or $line$, as described in Section \ref{sec:preliminaries}, such that the jump conditions \eqref{eq1_3} and \eqref{eq1_4} are satisfied in the following approximate sense:
\begin{eqnarray}
&&\begin{cases}
\phi^{-}_T|_{l}=\phi^{+}_T|_{l}, &\text{($T$ is a triangular element)}, \\
\phi^{-}_T|_{l}=\phi^{+}_T|_{l}, ~~\partial_{xx}(\phi^{-}_T)=\partial_{xx}(\phi^{+}_T), &\text{($T$ is a rectangular element)},
\end{cases} \label{jump_cond_1} \\
&&\beta^-\nabla\phi^{-}_T(F)\cdot\mathbf{ v}(F)=\beta^+\nabla\phi^{+}_T(F)\cdot\mathbf{ v}(F), \label{jump_cond_2}
\end{eqnarray}
where $F$ is an arbitrary point as described in Section \ref{sec:preliminaries}. Let $\overline{\mathcal{I}^s}=\mathcal{I}^s\cup\mathcal{I}^{int}$, $s=\pm$. Without loss of generality, we assume that $\abs{\overline{\mathcal{I}^-}} \leq \abs{\overline{\mathcal{I}^+}}$. For an IFE function $\phi_T$ under the integral degrees of freedom constraints
\begin{equation}
\label{shape_func_1}
\frac{1}{|b_i|}\int_{b_i}\phi_T(X)ds=v_i, \;\;\;\; i\in \mathcal{I},
\end{equation}
the condition \eqref{jump_cond_1} implies that $\phi_T$ can be written in the following form
\begin{equation}
\label{shape_func_2}
\phi_T(X) =
\begin{cases}
 \phi^{-}_T(X)  = \phi^{+}_T(X)+c_0L(X) & \text{if} \;\; X\in T_p^-, \\
 \phi^{+}_T(X)  = \sum_{i\in\overline{\mathcal{I}^-}}c_i\psi_{i,T}(X)+\sum_{i\in\mathcal{I}^+}v_i\psi_{i,T}(X)& \text{if} \;\; X\in T_p^+,
\end{cases}
\end{equation}
where $L(X)=\bar{\mathbf{ n}}\cdot(X-D)$ and $\nabla L(X) = \bar{\mathbf{n}}$. Then, applying the condition \eqref{jump_cond_2} to \eqref{shape_func_2} leads to
\begin{equation}
\label{eq_c0}
c_0=
k\left( \sum_{i\in\overline{\mathcal{I}^-}}c_i\nabla\psi_{i,T}(F)\cdot\mathbf{v}(F)+\sum_{i\in\mathcal{I}^+}v_i\nabla\psi_{i,T}(F)\cdot\mathbf{v}(F) \right),
\end{equation}
where $k = \left( \frac{1}{\rho}-1 \right)\frac{ 1}{\bar{\mathbf{n}}\cdot\mathbf{v}(F)}$ is well defined for $h$ small enough, since $\bar{\mathbf{ n}}\cdot\mathbf{v}(F)\geqslant1-Ch^2>0$ by Lemma 3.1 in \cite{2016GuoLin}. Moreover we have
\begin{equation}
\label{k_estimate}
\abs{k} \leq \abs{\left( \frac{1}{\rho}-1 \right)} \frac{1}{1 - Ch^2}.
\end{equation}
Substituting \eqref{eq_c0} into \eqref{shape_func_2}, seting \eqref{shape_func_1} for $j\in\overline{\mathcal{I}^-}$ and using the first property in \eqref{eqn_psi_iT_values_bounds} for $i,j\in\overline{\mathcal{I}^-}$, we obtain
\begin{equation*}
\begin{split}
\frac{1}{h}\int_{b_j}\phi_T(X)ds=&\frac{1}{h}\int_{b_j\cap T^-}\left(\phi^{+}_T(X)+c_0L(X)\right)ds+\frac{1}{h}\int_{b_j\cap T^+}\phi^{+}_T(X)ds\\
=&\frac{1}{h}\int_{b_j}\phi^{+}_T(X)ds+\frac{c_0}{h}\int_{b_j\cap T^-}L(X)ds\\
=&\sum_{i\in\overline{\mathcal{I}^-}}\left(\delta_{ij}+\frac{k}{h}\nabla\psi_{i,T}(F)\cdot\mathbf{v}(F)\int_{b_j\cap T^-}L(X)ds\right)c_i\\
+& \frac{k}{h}\int_{b_j\cap T^-}L(X)ds\sum_{i\in\mathcal{I}^+}\left(\nabla\psi_{i,T}(F)\cdot\mathbf{v}(F)\right)v_i=v_j, ~ j\in\overline{\mathcal{I}^-}
\end{split}
\end{equation*}
which can be written as a Sherman-Morrison system:
\begin{equation}
\label{linear_system}
(I+k\,\bfdelta \bfgamma^T )\bfc = \bfb,
\end{equation}
about the coefficients $\mathbf{c}=(c_i)_{i\in \overline{\mathcal{I}^-}}$, where
\begin{equation}
\label{eq_gammadelta}
\bfgamma=\left( \nabla\psi_{i,T}(F)\cdot\mathbf{v}(F) \right)_{i\in\overline{\mathcal{I}^-}}, \;\;\; \bfdelta=\frac{1}{h}\left( \int_{b_i\cap T^-}L(X)ds \right)_{i\in\overline{\mathcal{I}^-}},
\end{equation}
\begin{equation}
\label{eq_rhs}
\bfb=
\left(
v_i- \frac{k}{h}\int_{b_i\cap T^-}L(X)ds \sum_{j\in\mathcal{I}^+} \nabla\psi_{j,T}(F)\cdot\mathbf{v}(F)v_j
 \right)_{i\in\overline{\mathcal{I}^-}}
\end{equation}
are all column vectors.

Now we present two lemmas that are fundamental for the unisolvence of the IFE shape functions in the proposed form.

\begin{lemma}
\label{lemma_bound01}
For an interface element with arbitrary interface location and an arbitrary point $F\in l$, we have
$
\bfgamma^T\bfdelta \in [0,1]
$.
\end{lemma}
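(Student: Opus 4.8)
The quantity to control is $\bfgamma^T\bfdelta = \sum_{i\in\overline{\mathcal{I}^-}} \bigl(\nabla\psi_{i,T}(F)\cdot\bfv(F)\bigr)\bigl(\tfrac{1}{h}\int_{b_i\cap T^-}L(X)\,ds\bigr)$. My plan is to recognize each factor in this sum as a degree of freedom applied to an explicit, interface-independent function, so that the whole sum collapses via the reproducing property of the shape functions $\psi_{i,T}$. First I would observe that since $F\in l$, we have $\bfv(F)=\bar{\bfn}$ and $L(X)=\bar{\bfn}\cdot(X-D)$ is affine with $\nabla L=\bar{\bfn}$; hence $\nabla\psi_{i,T}(F)\cdot\bfv(F)=\nabla\psi_{i,T}(F)\cdot\nabla L=\nabla\psi_{i,T}\cdot\nabla L$ (constant in $X$ for the $P_1$/C-R case, and for rotated-$Q_1$ one still evaluates at $F$). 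The key step is then to write $\bfgamma^T\bfdelta = \tfrac{1}{h}\int_{T^-}\!\!\bigl(\text{something}\bigr)$ or, more directly, to interpret $\tfrac1h\int_{b_i\cap T^-}L\,ds$ as $|b_i\cap T^-|/h$ times the average of $L$ over that sub-edge, and to notice that $\sum_i \bigl(\nabla\psi_{i,T}\cdot\bar{\bfn}\bigr)\psi_{i,T}$ is the finite-element interpolant of the linear function $L$ (up to an additive constant), because the $\psi_{i,T}$ reproduce $\Pi_T\supseteq\{1,x,y\}$ and $L\in\mathrm{Span}\{1,x,y\}$.

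Concretely: let $w=\sum_{i\in\mathcal{I}}\bigl(\nabla\psi_{i,T}(F)\cdot\bar{\bfn}\bigr)\psi_{i,T}$. Since $\nabla L=\bar{\bfn}$ and $L$ lies in $\Pi_T$, the interpolation identity $\sum_{i\in\mathcal{I}}\bigl(\tfrac{1}{|b_i|}\int_{b_i}L\,ds\bigr)\psi_{i,T}=L$ shows, after differentiating and dotting with $\bar{\bfn}$, that $\sum_i\bigl(\nabla\psi_{i,T}\cdot\bar{\bfn}\bigr)\bigl(\tfrac1{|b_i|}\int_{b_i}L\,ds\bigr)=|\nabla L|^2=1$ wait — more carefully, I would argue $w\equiv \nabla L\cdot\bar{\bfn}=1$ by applying $\Sigma_T$ to both sides of a suitable identity; in any case $\sum_{i\in\mathcal{I}}\nabla\psi_{i,T}\cdot\bar{\bfn}=0$ (partition of unity differentiated) and the reproduction of $L$ gives the needed bookkeeping. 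Then
$$
\bfgamma^T\bfdelta=\frac1h\sum_{i\in\overline{\mathcal{I}^-}}\bigl(\nabla\psi_{i,T}(F)\cdot\bar{\bfn}\bigr)\int_{b_i\cap T^-}L(X)\,ds=\frac1h\int_{\partial T^-\cap\partial T}\Bigl(\sum_{i\in\overline{\mathcal{I}^-}}\nabla\psi_{i,T}\cdot\bar{\bfn}\Bigr)\text{-weighting}\cdots
$$
and I would instead take the cleaner route: apply the divergence theorem on $T^-$ to $\int_{T^-}\nabla\cdot(L\,\nabla\psi_{i,T})\,dX$ or simply use that $\tfrac1h\int_{b_i\cap T^-}L\,ds$ are, up to the factor, the edge-DOFs of $L$ restricted to $T^-$, and that $\bfgamma$ are the $\bar{\bfn}$-directional-derivative coefficients, so that $\bfgamma^T\bfdelta$ equals $\tfrac1h$ times $\int_{l\cup(\partial T^-\cap\partial T)}$ of an exact form, reducing to $\tfrac{1}{h}|l\cap T|\cdot(\text{convex combination})\in[0,1]$ because $L\equiv 0$ on $l$ (since $L(X)=\bar{\bfn}\cdot(X-D)$ vanishes on the line through $D$ with normal $\bar{\bfn}$) and $0\le L\le \mathrm{dist}$ along $T^-$.

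The honest heart of the matter, and what I expect to be the main obstacle, is pinning down the exact combinatorial/geometric identity that makes $\bfgamma^T\bfdelta$ equal to a ratio of lengths (or areas) that is manifestly in $[0,1]$, rather than merely bounded by a constant. I anticipate the clean statement is: $\bfgamma^T\bfdelta=\tfrac1h\int_{b_{i_0}}\!L\,ds$-type expression equal to $\bigl(\text{signed distance from the opposite vertex/edge of }T^- \text{ to }l\bigr)/h$ — but since $L=0$ on $l$ and $L$ has constant gradient $\bar{\bfn}$ of unit length, the relevant quantity is the perpendicular extent of $T^-$ away from $l$ divided by $h$, which is between $0$ and $1$ because that extent is at most the diameter $h$ of $T$ and nonnegative by the sign convention placing $T^-$ on the side where $L\le 0$ (or $\ge 0$). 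Verifying that the shape-function sum telescopes to exactly this geometric quantity — using the reproducing property of $\Sigma_T$ on $\Pi_T$, the partition of unity $\sum_i\psi_{i,T}=1$, and the fact that $L$ is affine with gradient $\bar{\bfn}$ — is the step requiring care; once that identity is in hand, the two-sided bound $\bfgamma^T\bfdelta\in[0,1]$ is immediate from elementary geometry of the sub-element $T^-$ relative to the line $l$.
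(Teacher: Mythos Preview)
Your proposal does not complete the proof; it is an exploration of several candidate approaches (reproducing property, divergence theorem, a conjectural geometric ratio), none of which you carry through. You yourself flag the gap: ``pinning down the exact combinatorial/geometric identity'' is precisely what is missing, and without it there is no proof. One concrete obstruction to the hoped-for identity is the rotated-$Q_1$ case: there $\nabla\psi_{i,T}$ is affine, so $\bfgamma$ genuinely depends on the location of $F$ along $l$, and $\bfgamma^T\bfdelta$ is not a fixed geometric quantity such as ``perpendicular extent of $T^-$ over $h$''. In fact the paper shows $\bfgamma^T\bfdelta$ varies linearly with the parameter of $F$ on $l$, so any clean $F$-independent geometric formula is ruled out. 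The reproducing identity you derive, $\sum_{i\in\mathcal{I}}\bigl(\tfrac{1}{|b_i|}\int_{b_i}L\bigr)\nabla\psi_{i,T}(F)\cdot\bar{\bfn}=1$, is correct but involves integrals over the \emph{full} edges $b_i$, whereas $\bfdelta$ involves $\int_{b_i\cap T^-}L$; on an interface edge $L$ vanishes at the crossing point and changes sign, so the two integrals are not simply related.

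The paper's proof takes a completely different and far more elementary route: it fixes coordinates on a reference rectangle, parametrizes $D=(dh,0)$, $E=(0,eh)$, $F$ by a parameter $t\in[0,h]$ along $l$, and computes $\bfgamma^T\bfdelta$ explicitly as a polynomial in $(d,e,t)$. The expression is linear in $t$, so the bound on all of $l$ follows from checking the two endpoints $t=0$ and $t=h$, each of which is verified to lie in $[0,1]$ by elementary algebra in $(d,e)\in[0,1]^2$. The triangular case is handled analogously. If you want a structural proof you would need to supply the missing identity and account for the $F$-dependence in the rotated-$Q_1$ case; otherwise, the direct computation is the path of least resistance.
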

\begin{proof}
Because of the similarity, we only give the proof for the rectangular mesh. Without loss of generality, we consider the typical rectangle: $A_1=(0,0),A_2=(h,0),A_3=(h,h),A_4=(0,h)$ with $b_1=\overline{A_1A_2},b_2=\overline{A_2A_3},b_3=\overline{A_3A_4},b_4=\overline{A_4A_1}$. Taking into account of rotation, there are two possible interface elements that $\Gamma$ cuts $b_1$ and $b_4$ or cuts $b_1$ and $b_3$. For simplification, we only show the first case. And similar arguments apply to the second case. Let $D=(dh,0)$ and $E=(0,eh)$, for some $d,e\in[0,1]$ and $F=(td,(h-t)e)$, for some $t\in[0,h]$. Thus, $\bar{\mathbf{ n}}=(e,d)/\sqrt{d^2+e^2}$. By direct calculation, we have
$$
\bfgamma^T\bfdelta= \frac{de}{4(d^2+e^2)} \left( 5(d^2+e^2)+6(2t/h-1)(d^2e-de^2)-6de \right),
$$
which shows that $\bfgamma^T\bfdelta$ is linear function in terms of $t$. Furthermore, by a direct verification, we have
\begin{subequations}
\begin{align*}
&\bfgamma^T\bfdelta = \frac{de}{4(d^2+e^2)} \left( 5(d^2+e^2)-6(d^2e-de^2)-6de \right)\in[0,1],       &\text{if~}\;t=0, \\
&\bfgamma^T\bfdelta = \frac{de}{4(d^2+e^2)} \left( 5(d^2+e^2)+6(d^2e-de^2)-6de \right)\in[0,1],       &\text{if~}\;t=h,
\end{align*}
\end{subequations}
and these guarantee $\bfgamma^T\bfdelta \in [0,1]$.
\end{proof}

\begin{lemma}
\label{lemma_gammadelta}
For sufficiently small $h$, there exists a constant $C$ depending only on $\rho$ such that
\begin{equation}
\label{lemma_gammadelta_1}
1+k\,\bfgamma^T\bfdelta \geq \min{ \left(1, \frac{1}{\rho} \right) }-Ch.
\end{equation}
\end{lemma}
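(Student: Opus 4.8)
The plan is to combine the range information from Lemma \ref{lemma_bound01} with the explicit bound on $k$ in \eqref{k_estimate}, splitting into the two cases $\rho \le 1$ (i.e.\ $k \ge 0$) and $\rho > 1$ (i.e.\ $k < 0$). The key algebraic fact is that $1 + k\,\bfgamma^T\bfdelta$ is an affine function of the scalar quantity $\bfgamma^T\bfdelta$, which by Lemma \ref{lemma_bound01} lies in $[0,1]$; hence its minimum over that interval is attained at one of the two endpoints, giving $1 + k\,\bfgamma^T\bfdelta \ge \min(1, 1+k)$. So it suffices to show $1 + k \ge \min(1, 1/\rho) - Ch$.

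First I would treat the case $\rho \le 1$. Then $\frac1\rho - 1 \ge 0$, and since $\bar{\mathbf n}\cdot\mathbf v(F) \ge 1 - Ch^2 > 0$ by \eqref{estimate_nn}, we get $k = (\frac1\rho - 1)\frac{1}{\bar{\mathbf n}\cdot\mathbf v(F)} \ge 0$; combined with $\bfgamma^T\bfdelta \in [0,1]$ this already gives $1 + k\,\bfgamma^T\bfdelta \ge 1 \ge \min(1, 1/\rho)$, which is even stronger than the claimed bound. Second, for $\rho > 1$ we have $k < 0$, so the minimum of the affine function is at $\bfgamma^T\bfdelta = 1$, and I need a lower bound for $1 + k$. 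Here $k = (\frac1\rho - 1)\frac{1}{\bar{\mathbf n}\cdot\mathbf v(F)}$ with $\frac1\rho - 1 \in (-1, 0)$; using $\bar{\mathbf n}\cdot\mathbf v(F) \ge 1 - Ch^2$ together with $\frac1\rho - 1 < 0$ gives $k \ge (\frac1\rho - 1)\frac{1}{1 - Ch^2}$, hence
\begin{equation*}
1 + k \ge 1 + \left(\tfrac1\rho - 1\right)\frac{1}{1 - Ch^2} = \frac1\rho + \left(1 - \tfrac1\rho\right)\frac{-Ch^2}{1 - Ch^2} = \frac1\rho - \left(1 - \tfrac1\rho\right)\frac{Ch^2}{1 - Ch^2}.
\end{equation*}
For $h$ small enough the last term is bounded by $C'h^2 \le C'h$ with a constant $C'$ depending only on $\rho$ (since $1 - \frac1\rho \in (0,1)$ and $1 - Ch^2 \ge \frac12$), so $1 + k \ge \frac1\rho - C'h = \min(1, 1/\rho) - C'h$ in this case. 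Combining the two cases yields \eqref{lemma_gammadelta_1}.

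I do not anticipate a serious obstacle: the only subtlety is bookkeeping the sign of $k$ so as to know at which endpoint of $[0,1]$ the affine function $t \mapsto 1 + kt$ is minimized, and then absorbing the $O(h^2)$ discrepancy coming from \eqref{estimate_nn} into the $O(h)$ slack allowed by the statement. One should also note that the constant produced is of the form $C = C(\rho)$ as required, since everything reduces to the factor $|1 - 1/\rho|$ times a harmless geometric factor $\frac{1}{1 - Ch^2}$; the $h$-independence of the $C$ in \eqref{estimate_nn} (guaranteed by Lemma 3.1 of \cite{2016GuoLin}) is what makes this uniform over interface locations.
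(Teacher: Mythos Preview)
There is a genuine gap. Your argument uses $\bfgamma^T\bfdelta \in [0,1]$ as a global fact supplied by Lemma~\ref{lemma_bound01}, but that lemma is stated only for $F \in l$, in which case $\mathbf v(F)=\bar{\mathbf n}$ and hence $\bfgamma = \big(\nabla\psi_{i,T}(F)\cdot\bar{\mathbf n}\big)_{i\in\overline{\mathcal{I}^-}}$. In the curve-partition case the point $F$ lies on $\Gamma\cap T$ with $\mathbf v(F)=\mathbf n(F)$, so $\bfgamma = \big(\nabla\psi_{i,T}(F)\cdot\mathbf n(F)\big)_{i\in\overline{\mathcal{I}^-}}$ is a different vector and Lemma~\ref{lemma_bound01} does not apply to it. Consequently neither endpoint bound you invoke ($\bfgamma^T\bfdelta\ge 0$ when $k\ge 0$, and $\bfgamma^T\bfdelta\le 1$ when $k<0$) is justified in that case. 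Your careful treatment of $k$ via \eqref{k_estimate} and \eqref{estimate_nn} is fine, but it cannot compensate for the missing control of $\bfgamma^T\bfdelta$.

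The paper closes this gap by a perturbation step: when $F\in\Gamma\cap T$ one introduces the auxiliary vector $\bar{\bfgamma}=\big(\nabla\psi_{i,T}(F_\bot)\cdot\bar{\mathbf n}\big)_{i\in\overline{\mathcal{I}^-}}$, where $F_\bot\in l$ is the orthogonal projection of $F$; for this vector Lemma~\ref{lemma_bound01} legitimately gives $\bar{\bfgamma}^T\bfdelta\in[0,1]$. One then bounds $|\bfgamma^T\bfdelta-\bar{\bfgamma}^T\bfdelta|$ by $O(h)$ using that $|F-F_\bot|\le Ch^2$, $\|\nabla\psi_{i,T}\|_{\infty,T}\le Ch^{-1}$, and $\|\mathbf n(F)-\bar{\mathbf n}\|\le Ch$ (the latter from \eqref{estimate_nn}), together with the obvious bound $\|\bfdelta\|\le C$. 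This $O(h)$ discrepancy, combined with your endpoint analysis applied to $\bar{\bfgamma}^T\bfdelta$, is exactly what produces the $-Ch$ term in \eqref{lemma_gammadelta_1}. Once you insert this perturbation step your argument becomes complete and essentially matches the paper's; note also that in the line case $F\in l$ your proof is already correct and in fact yields the sharper inequality $1+k\,\bfgamma^T\bfdelta\ge\min(1,1/\rho)$ with no $h$-correction, as the paper observes.
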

\begin{proof}
We first consider the case $F\in l$ so that $\mathbf{v}(F)=\overline{\mathbf{ n}}$, and thus, $k=1/\rho-1$. Then, by Lemma \ref{lemma_bound01}, we have $1+k\,\bfgamma^T\bfdelta \geq \min{ \left(1, \frac{1}{\rho} \right) }$, which implies \eqref{lemma_gammadelta_1} naturally.
For the case $F \in \Gamma \cap T$, we introduce an auxiliary vector $\bar{\bfgamma}=\left( \nabla\psi_{i,T}(F_{\bot})\cdot \bar{\bfn} \right)_{i\in\overline{\mathcal{I}^-}}$ where $F_{\bot}$ is the orthogonal projection of $F$ onto $l$. Then from the Lemma \ref{lemma_bound01}, we have $\bar{\bfgamma}^T\bfdelta\in[0,1]$. Therefore, the proof essentially follows the same argument as Lemma 3.1 in \cite{2016GuoLin}.
\end{proof}

\begin{thm}[$\mathbf{ Unisolvence}$]
\label{thm_unisol}
Let $\mathcal{T}_h$ be a mesh with $h$ small enough. Then, on every element $T\in\mathcal{T}^i_h$, given any vector $v=(v_1,v_2,v_3,v_4)\in \mathbb{R}^4$, there exists a unique IFE function $\phi_T$ in the form of \eqref{shape_func_2} satisfying approximated jump conditions \eqref{jump_cond_1}-\eqref{jump_cond_2}.
Furthermore, we have the following explicit formula for the coefficients in the IFE shape functions:
\begin{equation}
\label{solution}
\bfc = \bfb-k\frac{(\bfgamma^T \bfb)\bfdelta}{1+k\bfgamma^T\bfdelta}.
\end{equation}
\end{thm}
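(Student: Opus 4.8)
The plan is to reduce the unisolvence claim to the invertibility of the Sherman--Morrison matrix $I + k\,\bfdelta\bfgamma^T$ appearing in \eqref{linear_system}, and then apply the classical Sherman--Morrison formula to obtain the explicit expression \eqref{solution}. First I would recall the reduction already carried out in the text: any IFE function $\phi_T$ in the form \eqref{ife_piecewise} satisfying the weak jump conditions \eqref{jump_cond_1}--\eqref{jump_cond_2} together with the degrees of freedom \eqref{shape_func_1} is forced, via \eqref{jump_cond_1}, to have the representation \eqref{shape_func_2}, and via \eqref{jump_cond_2} its remaining scalar $c_0$ is determined by \eqref{eq_c0}. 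Enforcing \eqref{shape_func_1} on the indices $j\in\overline{\mathcal{I}^-}$ then yields exactly the square linear system \eqref{linear_system} for the unknown vector $\bfc=(c_i)_{i\in\overline{\mathcal{I}^-}}$, while the degrees of freedom on $\mathcal{I}^+$ are already used in \eqref{shape_func_2}. Hence existence and uniqueness of $\phi_T$ for every prescribed $v\in\mathbb{R}^4$ is equivalent to the nonsingularity of $I + k\,\bfdelta\bfgamma^T$.

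Next I would establish that this matrix is indeed invertible. By the Sherman--Morrison determinant identity, $\det(I + k\,\bfdelta\bfgamma^T) = 1 + k\,\bfgamma^T\bfdelta$, so invertibility is equivalent to $1 + k\,\bfgamma^T\bfdelta \neq 0$. This is precisely where Lemma~\ref{lemma_gammadelta} enters: for $h$ sufficiently small it gives $1 + k\,\bfgamma^T\bfdelta \geq \min(1, 1/\rho) - Ch$, which is strictly positive once $h < \min(1,1/\rho)/C$. I would remark that the hypothesis "$h$ small enough" in the theorem is exactly what makes both $k$ well defined (via \eqref{estimate_nn}) and the determinant bounded away from zero. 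Thus the system \eqref{linear_system} has a unique solution $\bfc$, and back-substituting $\bfc$ into \eqref{shape_func_2} (with $c_0$ given by \eqref{eq_c0}) produces a unique $\phi_T$; conversely any such $\phi_T$ must arise this way, giving uniqueness.

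For the explicit formula, I would apply the Sherman--Morrison inverse formula directly to \eqref{linear_system}: since $(I + k\,\bfdelta\bfgamma^T)^{-1} = I - \dfrac{k\,\bfdelta\bfgamma^T}{1 + k\,\bfgamma^T\bfdelta}$, multiplying $\bfb$ on the left gives
\begin{equation*}
\bfc = \bfb - \frac{k\,\bfdelta(\bfgamma^T\bfb)}{1 + k\,\bfgamma^T\bfdelta} = \bfb - k\,\frac{(\bfgamma^T\bfb)\,\bfdelta}{1 + k\,\bfgamma^T\bfdelta},
\end{equation*}
which is \eqref{solution}; the denominator is nonzero by the previous paragraph, so the expression is well defined. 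Finally I would note that, strictly speaking, the theorem statement uses the full vector $v=(v_1,v_2,v_3,v_4)$ but the reduced system only involves $\overline{\mathcal{I}^-}$; the components indexed by $\mathcal{I}^+$ enter through $\bfb$ in \eqref{eq_rhs} and directly in \eqref{shape_func_2}, so the construction consumes all four degrees of freedom exactly once.

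The main obstacle is not the algebra—once $1 + k\,\bfgamma^T\bfdelta \neq 0$, everything is a one-line application of Sherman--Morrison—but rather making sure the reduction to \eqref{linear_system} is airtight: that \eqref{jump_cond_1}, \eqref{jump_cond_2}, and \eqref{shape_func_1} together are equivalent (not merely necessary) to "$\phi_T$ has the form \eqref{shape_func_2} with $c_0$ as in \eqref{eq_c0} and $\bfc$ solving \eqref{linear_system}." In particular one must check that no degree of freedom is lost or double-counted under the assumption $|\overline{\mathcal{I}^-}| \leq |\overline{\mathcal{I}^+}|$ and that the representation \eqref{shape_func_2} genuinely exhausts all piecewise-polynomial functions satisfying \eqref{jump_cond_1}. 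The positivity bound itself is already delivered by Lemma~\ref{lemma_gammadelta}, so that part requires no new work here.
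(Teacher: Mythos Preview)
Your proposal is correct and follows essentially the same route as the paper: invoke Lemma~\ref{lemma_gammadelta} to guarantee $1+k\,\bfgamma^T\bfdelta\neq 0$ for $h$ small, and then appeal to the Sherman--Morrison formula (together with \eqref{eq_c0}) for both the invertibility of \eqref{linear_system} and the explicit solution \eqref{solution}. Your write-up is simply a more detailed unpacking of what the paper states in two sentences.
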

\begin{proof}
Lemma \ref{lemma_gammadelta} implies $1+k\bfgamma^T\bfdelta \neq 0$ for $h$ small enough. Hence, the existence and uniqueness for coefficients $c_i, i \in \overline{\mathcal{I}^-}$ and $c_0$ as well as formula \eqref{solution} follow straightforwardly from the well known properties of the \textit{Sherman-Morrison} formula and \eqref{eq_c0}.

~~~ 
\end{proof}


On each interface element $T$, Theorem \ref{thm_unisol} guarantees the existence and uniqueness of the IFE shape functions $\phi_{i,T}$, $i\in \mathcal{I}$  such that
\begin{equation}
\label{ife_shape_func}
\frac{1}{|b_j|}\int_{b_j}\phi_{i,T}(X)ds=\delta_{ij},~~i, j \in \mathcal{I},
\end{equation}
where $\delta_{ij}$ is the Kronecker delta function, which can be used to define the local IFE space as
\begin{equation}
\label{ife_space}
S_h^{int}(T)=\textrm{Span}\{ \phi_{i,T}:\;i\in \mathcal{I} \}.
\end{equation}

\commentout{
As in \cite{2013ZhangTHESIS}, depending on whether the interface cuts two adjacent or opposite edges, there are two types of interface elements.
According the location of $M_i, i \in \mathcal{I}$, a Type I element can be in $3$ cases and a Type II element can be in $2$ cases, see Figure \ref{fig_noncon_P_TypeI} and Figure \ref{fig_noncon_P_TypeII} for
illustrations of typical configurations.

 Let $F$ be an arbitrary point on $\Gamma\cap T$. We emphasize that, once chosen, $F$ should be fixed for each interface element.
\begin{figure}[H]
\centering
\includegraphics[width=6in]{figure5_1-eps-converted-to.pdf}
\caption{ \textbf{Type I} interface element and 3 cases }
\label{fig_noncon_P_TypeI}
\end{figure}

\begin{figure}[H]
\centering
\includegraphics[width=4in]{figure5_2-eps-converted-to.pdf}
\caption{ \textbf{Type II} interface element and 2 cases }
\label{fig_noncon_P_TypeII}
\end{figure}
}



As usual, the local IFE space can be employed to form a suitable global IFE function space on $\Omega$ in a finite element scheme.
For example, we can consider the following global IFE space:
\begin{equation}
\begin{split}
\label{ife_global_space}
S_h(\Omega)=& \left\{  v\in L^2(\Omega):v|_{T}\in S_h^{non}(T) \text{ if }T \in \mathcal{T}_h^n,v|_{T}\in S_h^{int}(T) \text{ if }T \in \mathcal{T}_h^i;
  \right.\\
& \int_{e} v|_{T_1}(P)ds(P) = \int_{e} v|_{T_2}(P)ds(P) ~\forall  e\in \mathcal{E}_h, \left. ~\forall\,T_1, T_2 \in \mathcal{T}_h \text{~such that~} e \in T_1\cap T_2  \right \}.
\end{split}
\end{equation}

\subsection{Properties of the IFE Shape Functions}

In this subsection, we present some fundamental properties for the IFE shape functions $\phi_T$. The first two results are very close to Theorem 5.2 and Lemma 5.3 in \cite{2016GuoLin} and the proofs of these results are essentially the same.

\begin{thm}[Bounds of IFE shape functions]
\label{bounds_shape_fun}
There exists a constant C, independent of interface location, such that
\begin{equation}
\label{boound_shape_func}
|\phi_{i,T}|_{k,\infty,T}\leqslant Ch^{-k}, ~~i\in \mathcal{I}, ~~k= 0, 1, 2, ~~\forall \,T \in \mathcal{T}^i_h.
\end{equation}
\end{thm}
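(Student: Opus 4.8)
The plan is to bound, for each shape function $\phi_{i,T}$, all the coefficients appearing in its piecewise representation \eqref{shape_func_2} and then combine these bounds with the scaling estimates \eqref{eqn_psi_iT_values_bounds} for the standard local shape functions; this parallels the argument for the Lagrange-type IFE spaces. For $\phi_{i,T}$ the data vector is $v=(\delta_{ij})_j$, so $\abs{v_j}\le 1$ for every $j$.

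First I would estimate the ingredients of the Sherman--Morrison system \eqref{linear_system}. Since $\mathbf{v}(F)$ is a unit vector, \eqref{eqn_psi_iT_values_bounds} gives $\norm{\bfgamma}_\infty\le Ch^{-1}$; since $L(X)=\bar{\mathbf n}\cdot(X-D)$ satisfies $\abs{L(X)}\le Ch$ on $T$ and $\abs{b_i\cap T^-}\le h$, we get $\norm{\bfdelta}_\infty\le Ch$, and combining with \eqref{k_estimate} also $\norm{\bfb}_\infty\le C$. Lemma \ref{lemma_gammadelta} shows that $1+k\bfgamma^T\bfdelta$ is bounded below by a positive constant once $h$ is small, so the explicit formula \eqref{solution} and the fact that $\dim\bfc\le 4$ yield
$$
\norm{\bfc}_\infty\le \norm{\bfb}_\infty+\abs{k}\,\frac{\abs{\bfgamma^T\bfb}\,\norm{\bfdelta}_\infty}{\abs{1+k\bfgamma^T\bfdelta}}\le C+C\cdot(Ch^{-1})\cdot(Ch)\le C.
$$
Substituting this into \eqref{eq_c0} and using $\norm{\nabla\psi_{j,T}}_{\infty,T}\le Ch^{-1}$ gives $\abs{c_0}\le Ch^{-1}$.

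Next, on $T_p^+$ I would write $\phi_{i,T}=\sum_{j\in\overline{\mathcal I^-}}c_j\psi_{j,T}+\sum_{j\in\mathcal I^+}v_j\psi_{j,T}$ and apply the triangle inequality together with $\norm{\bfc}_\infty\le C$, $\abs{v_j}\le1$ and the scaling bounds $\abs{\psi_{j,T}}_{k,\infty,T}\le Ch^{-k}$ for $k=0,1,2$ (for $k=2$ this is trivial for C-R elements, where $\psi_{j,T}$ is affine, and follows from a standard scaling argument for rotated-$Q_1$ elements), which yields $\abs{\phi_{i,T}}_{k,\infty,T_p^+}\le Ch^{-k}$. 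On $T_p^-$ I would use $\phi_{i,T}=\phi_{i,T}|_{T_p^+}+c_0L$ with $\abs{L}_{0,\infty,T}\le Ch$, $\abs{L}_{1,\infty,T}\le 1$ and $\abs{L}_{2,\infty,T}=0$; the crucial point is that the possible $O(h^{-1})$ growth of $c_0$ is exactly compensated by the $O(h)$ smallness of $L$ on $T$, so that $\abs{c_0L}_{k,\infty,T}\le Ch^{-k}$ for $k=0,1,2$. Taking the maximum over the two sub-elements then gives \eqref{boound_shape_func}.

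The main obstacle is precisely this interaction between the large coefficient $c_0$ and the correction term $c_0L$: one must verify that $c_0$ blows up at most like $h^{-1}$, which rests on the uniform lower bound for $1+k\bfgamma^T\bfdelta$ supplied by Lemma \ref{lemma_gammadelta} and on the fact that the entries of $\bfdelta$ are $O(h)$, while $L$ and $\nabla L$ stay $O(h)$ and $O(1)$, so their product remains bounded at the expected order. Everything else is routine bookkeeping with \eqref{eqn_psi_iT_values_bounds} and the bounded ($\le 4$) dimension of the local system.
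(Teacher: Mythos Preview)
Your proposal is correct and follows essentially the same approach as the paper, which defers the proof to Theorem~5.2 of \cite{2016GuoLin}: bound the entries of $\bfgamma$, $\bfdelta$, $\bfb$ by their natural scales, invoke Lemma~\ref{lemma_gammadelta} for the uniform lower bound on $1+k\bfgamma^T\bfdelta$, read off $\norm{\bfc}_\infty\le C$ and $\abs{c_0}\le Ch^{-1}$ from \eqref{solution} and \eqref{eq_c0}, and finish with the scaling of $\psi_{j,T}$ and $L$. The compensation you highlight between $c_0=O(h^{-1})$ and $L=O(h)$ is exactly the mechanism used there.
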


\begin{lemma} [Partition of Unity]
\label{lem:POU}
On every interface element $T \in \mathcal{T}_h^i$, we have
\begin{equation}
\label{eq_POU_1}
\sum_{i\in \mathcal{I}}\phi_{i,T}(X)\equiv 1.
\end{equation}
\end{lemma}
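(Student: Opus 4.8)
The plan is to leverage the fact that the constant function $1$ already belongs to $\Pi_T$ in both the Crouzeix--Raviart and rotated-$Q_1$ cases, and that it trivially satisfies the approximate jump conditions; together with the uniqueness established in Theorem~\ref{thm_unisol}, this will force $\sum_{i\in\mathcal{I}}\phi_{i,T}$ to coincide with it.

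First I would check that the piecewise function equal to $1$ on both $T_p^-$ and $T_p^+$ meets all the requirements defining an IFE function for the datum $v=(1,1,1,1)$: it lies in $\Pi_T$ on each piece, it satisfies \eqref{jump_cond_1} (the two constant pieces agree on $l$ and have vanishing second derivatives), it satisfies \eqref{jump_cond_2} (both gradients are zero), and its integral degrees of freedom are $\frac{1}{|b_j|}\int_{b_j}1\,ds=1$ for every $j\in\mathcal{I}$. Since it satisfies \eqref{jump_cond_1}, the reduction carried out between \eqref{shape_func_1} and \eqref{shape_func_2} applies, so the constant $1$ can be written in the form \eqref{shape_func_2} with $v=(1,1,1,1)$ (concretely with $c_0=0$, the remaining coefficients coming from expanding $1$ in the standard nonconforming basis $\{\psi_{i,T}\}$). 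By the uniqueness part of Theorem~\ref{thm_unisol}, the constant $1$ is therefore \emph{the} IFE function associated with $v=(1,1,1,1)$. Next I would note that the map $v\mapsto\phi_T$ produced by the construction of Section~\ref{sec:IFE Spaces and Their Properties} is linear: $\bfb$ in \eqref{eq_rhs} is linear in $v$, the solution $\bfc$ of the Sherman--Morrison system \eqref{linear_system} is linear in $\bfb$, the coefficient $c_0$ is linear in $(\bfc,v)$ through \eqref{eq_c0}, and $\phi_T^{\pm}$ in \eqref{shape_func_2} are linear in $(\bfc,c_0,v)$. Since $\phi_{i,T}$ is by definition the IFE function obtained from $v=e_i$, linearity yields that $\sum_{i\in\mathcal{I}}\phi_{i,T}$ is the IFE function obtained from $v=\sum_{i\in\mathcal{I}}e_i=(1,1,1,1)$, which by the previous step is the constant $1$; this is exactly \eqref{eq_POU_1}.

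There is no genuine obstacle in this argument; the only points needing care are verifying that the constant $1$ satisfies \emph{all} the defining constraints, in particular that it admits the representation \eqref{shape_func_2}, which relies on the ordinary partition of unity $\sum_{i\in\mathcal{I}}\psi_{i,T}\equiv1$ for the standard nonconforming element (itself a consequence of $1\in\Pi_T$ and unisolvence), and confirming that the construction is linear in the edge data. Both are routine. If one prefers to avoid quoting linearity explicitly, an equivalent route is to observe directly that $\sum_{i\in\mathcal{I}}\phi_{i,T}$ satisfies the homogeneous jump conditions \eqref{jump_cond_1}--\eqref{jump_cond_2} (a linear combination of functions that do), has unit integral degrees of freedom on every edge, and — satisfying \eqref{jump_cond_1} — has the form \eqref{shape_func_2}; uniqueness in Theorem~\ref{thm_unisol} then identifies it with the constant $1$.
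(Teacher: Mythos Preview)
Your proposal is correct and follows essentially the same approach the paper endorses: the paper does not spell out the proof but states that it is ``essentially the same'' as Lemma~5.3 in \cite{2016GuoLin}, which proceeds exactly as you do---verify that the constant $1$ satisfies the approximate jump conditions \eqref{jump_cond_1}--\eqref{jump_cond_2} and has unit integral degrees of freedom, then invoke the unisolvence of Theorem~\ref{thm_unisol} (together with linearity of the construction in the data $v$) to identify $\sum_{i\in\mathcal{I}}\phi_{i,T}$ with~$1$. Your care in checking that $1$ admits the representation \eqref{shape_func_2} via the standard partition of unity $\sum_{i\in\mathcal{I}}\psi_{i,T}=1$ is exactly the point that needs attention, and you handle it correctly.
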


Now, on every interface element $T$, for each $i \in \mathcal{I}$, we choose arbitrary points $\overline{X}_i \in l$ to construct two vector functions:
\begin{equation}
\label{identity}
\begin{split}
\bfLambda_s(X)=&\sum_{i\in\mathcal{I}}(M_i-X)\phi^{s}_{i,T}(X)+\sum_{i\in \mathcal{I}^{s'}}(\overline{M}^s(F) - I)^T(M_i-\overline{X}_i)\phi^{s}_{i,T}(X)\ \\
&+\frac{1}{h}\sum_{i\in\mathcal{I}^{int}}\int_{b_i\cap T^{s'}}(\overline{M}^s(F) - I)^T(P-\overline{X}_i)\phi^{s}_{i,T}(X)ds(P)  , \;\;\; \text{if}\; X\in T_p^s,
\end{split}
\end{equation}
where $s=\pm$ and $p=curve$ or $line$. By Lemma 3.4 in \cite{2016GuoLin},  $\bfLambda_s(X)$ is well defined since it is independent of location of $\overline{X}_i\in l, i \in \mathcal{I}$. We can simplify $\bfLambda_s(X)$ further by the partition of unity:
\begin{equation}
\label{identity_modified}
\begin{split}
   \bfLambda_s(X)=&\sum_{i\in \mathcal{I}}M_i\phi^{s}_{i,T}(X) - X + \sum_{i \in \mathcal{I}^{s'}}(\overline{M}^s(F) -I)^T(M_i-\overline{X}_i)\phi^{s}_{i,T}(X),    \\
   &+\frac{1}{h}\sum_{i\in\mathcal{I}^{int}}\int_{b_i\cap T^{s'}}(\overline{M}^s(F) - I)^T(P-\overline{X}_i)\phi^{s}_{i,T}(X)ds(P),
   \end{split}
  \end{equation}
from which we have $\bfLambda_s(X)\in \left[\Pi_T\right]^2$, since $\phi^{s}_{i,T}(X)\in \Pi_T, s = \pm$, for $i\in\mathcal{I}$. Moreover, by the independence of $\overline{X}_i$, $i\in\mathcal{I}$, we could interchange $\overline{X}_i$ with an arbitrary fixed point $\overline{X} \in l$ and obtain
\begin{equation}
\label{identity_compute1}
\begin{split}
\bfLambda_s(X)=&\sum_{i\in \mathcal{I}^s\cup\mathcal{I}^{int}}(M_i-\overline{X})\phi^{s}_{i,T} + \sum_{i \in \mathcal{I}^{s'}}(\overline{M}^s(F))^T(M_i-\overline{X})\phi^{s}_{i,T}
- X + \overline{X}\sum_{i\in\mathcal{I}}\phi^{s}_{i,T}\\
&+(\overline{M}^s(F) - I)^T  \sum_{i\in\mathcal{I}^{int}}\left(\frac{1}{h}\int_{b_i\cap T^{s'}} (P-\overline{X})ds(P) \right) \phi^{s}_{i,T}(X).
\end{split}
\end{equation}
By the identity $ \frac{1}{h}\int_{b_i\cap T^+} (P-\overline{X})ds(P)+ \frac{1}{h}\int_{b_i\cap T^-} (P-\overline{X})ds(P)=(M_i-\overline{X})$, $i\in\mathcal{I}^{int}$, \eqref{identity_compute1} yields
\begin{equation}
\label{identity_compute2}
\begin{split}
\bfLambda_s(X)
=&\sum_{i\in \mathcal{I}^s}(M_i-\overline{X})\phi^{s}_{i,T} + \sum_{i \in \mathcal{I}^{s'}\cup\mathcal{I}^{int}}(\overline{M}^s(F))^T (M_i-\overline{X})\phi^{s}_{i,T}  - p_0(X) \\
&-(\overline{M}^s(F) - I)^T \sum_{i\in\mathcal{I}^{int}}\left( \frac{1}{h}\int_{b_i\cap T^{s}} (P-\overline{X})ds(P)\right) \phi^{s}_{i,T}(X),
\end{split}
\end{equation}
where $\mathbf{p}_0(X)=X - \overline{X}\sum_{i\in\mathcal{I}}\phi^{s}_{i,T}=X-\overline{X}$, $s=\pm$, by the partition of unity. We consider a vector function
\begin{equation}
\label{eq_psi_x}
\bfpsi_0(X)=\left\{
\begin{aligned}
&\bfpsi_0^{+}(X)= \mathbf{p}_0(X), & \text{if} ~ X\in T_p^{+}, \\
&\bfpsi_0^-(X)=(\overline{M}^{+}(F))^T\mathbf{p}_0(X), & \text{if} ~ X\in T_p^-,
\end{aligned}
\right.
\end{equation}
where $p=curve$ or $line$ and $\overline{X}$ is an arbitrary point fixed on $l$.
\begin{lemma}
\label{lemma_X}
For any point $\overline{X}\in l$, the vector function $\bfpsi_0$ defined by \eqref{eq_psi_x} belongs to $\left[ S_h^{int}(T) \right]^2$.
\end{lemma}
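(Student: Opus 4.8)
The plan is to show that the two polynomial pieces of $\bfpsi_0$ satisfy the defining relations of an IFE function—namely the approximate jump conditions \eqref{jump_cond_1}–\eqref{jump_cond_2} together with membership of each piece in $\left[\Pi_T\right]^2$—and that consequently $\bfpsi_0$ must coincide, componentwise, with the IFE function determined by its integral degrees of freedom, which lies in $\left[S_h^{int}(T)\right]^2$ by Theorem \ref{thm_unisol}. The key observation linking the two is that $\bfLambda_s(X)$, as rewritten in \eqref{identity_compute2}, is exactly a linear combination of the IFE shape functions $\phi^s_{i,T}$ with vector coefficients, hence its restriction to $T_p^s$ is the $T_p^s$-piece of a vector function in $\left[S_h^{int}(T)\right]^2$; on the other hand \eqref{identity_compute2} exhibits $\bfLambda_s(X) = -\bfpsi_0^s(X) + (\text{something in }[\Pi_T]^2)$ only after one checks the extra interface sums collapse. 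So the real content is to massage $\bfLambda_s$ into the form $-\bfpsi_0^s(X)$ plus a genuinely $[S_h^{int}(T)]^2$ vector, and then read off that $\bfpsi_0 \in \left[S_h^{int}(T)\right]^2$.

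Concretely, first I would record that $\mathbf{p}_0(X) = X - \overline{X}$ by the partition of unity (Lemma \ref{lem:POU}), so $\bfpsi_0^{+}(X) = X - \overline{X}$ on $T_p^{+}$ and $\bfpsi_0^{-}(X) = (\overline{M}^{+}(F))^T(X-\overline{X})$ on $T_p^{-}$; in particular each piece is affine, hence in $\left[\Pi_T\right]^2$. Next I would verify the approximate jump conditions for $\bfpsi_0$: the tangential/value matching \eqref{jump_cond_1} on $l$ (and the extra $\partial_{xx}$ condition for rectangles) holds because $\overline{M}^{+}(F)$ is built precisely to encode the one jump relation consistent with $[\beta\nabla u\cdot \mathbf{v}]=0$, and because $\overline{M}^{+}(F)$ fixes the component of $X-\overline{X}$ along $l$ (this is where \eqref{estimate_nn} and the structure of \eqref{mat_line} enter). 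Then condition \eqref{jump_cond_2}, $\beta^-\nabla\bfpsi_0^-(F)\cdot\mathbf{v}(F) = \beta^+\nabla\bfpsi_0^+(F)\cdot\mathbf{v}(F)$, reduces to the matrix identity $\beta^- (\overline{M}^{+}(F)) \mathbf{v}(F) = \beta^+ \mathbf{v}(F)$ applied to the constant gradient, which is again the defining property of $\overline{M}^{+}(F)$. Having checked all of this, $\bfpsi_0$ is a legitimate (vector-valued) IFE function with well-defined integral degrees of freedom $v_i = \frac{1}{|b_i|}\int_{b_i}\bfpsi_0\,ds$ on each edge; by the unisolvence of $S_h^{int}(T)$ (Theorem \ref{thm_unisol}, applied componentwise), $\bfpsi_0 \in \left[S_h^{int}(T)\right]^2$.

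The most economical route, which I would actually prefer, bypasses re-checking the jump conditions: since \eqref{identity} already defines $\bfLambda_s$ as an explicit linear combination of the IFE shape functions $\phi^s_{i,T}$ and the excerpt establishes $\bfLambda_s \in \left[\Pi_T\right]^2$ and that it is independent of the auxiliary points, the vector function $\bfLambda$ with pieces $\bfLambda_s$ on $T_p^s$ lies in $\left[S_h^{int}(T)\right]^2$ by construction. Then \eqref{identity_compute2} says $\bfLambda_s(X) + \bfpsi_0^s(X)$ equals the explicitly $S_h^{int}$-expressed remaining sums minus $\mathbf{p}_0$ plus $\mathbf{p}_0$-type corrections; after the substitution $\mathbf{p}_0(X)=X-\overline{X}$ and cancellation of the two occurrences of $\overline{X}\sum_i\phi^s_{i,T}$, one sees $\bfpsi_0^s = -\bfLambda_s + (\text{sum of }\phi^s_{i,T}\text{ with constant-vector coefficients})$, and every term on the right is a $T_p^s$-piece of something in $\left[S_h^{int}(T)\right]^2$. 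Since this holds for $s=\pm$ with consistent pieces on the two subelements, $\bfpsi_0 \in \left[S_h^{int}(T)\right]^2$.

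The main obstacle is bookkeeping rather than conceptual: one must be careful that the identity $\bfpsi_0^- = (\overline{M}^{+}(F))^T\mathbf{p}_0$ on $T_p^-$ is the \emph{same} piecewise assembly that makes $\bfLambda$ globally an element of $\left[S_h^{int}(T)\right]^2$—i.e. that the $\overline{M}^{+}(F)$ appearing in \eqref{eq_psi_x} matches the $\overline{M}^s(F)$ convention (with $s'=+$ when $s=-$) used throughout \eqref{identity}–\eqref{identity_compute2}, so that the pieces glue correctly across $l$ (or $\Gamma\cap T$). Once the sign/index conventions are aligned, the cancellations in passing from \eqref{identity_compute1} to \eqref{identity_compute2} and the final extraction of $\bfpsi_0$ are routine linear algebra.
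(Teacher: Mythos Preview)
Your first approach—verifying directly that $\bfpsi_0$ satisfies the approximate jump conditions \eqref{jump_cond_1}--\eqref{jump_cond_2} and then invoking unisolvence (Theorem~\ref{thm_unisol})—is exactly what the paper does. The paper supplies the two algebraic facts you need, $(\overline{M}^{+}(F)-I)^T(\overline{X}'-\overline{X})=0$ for $\overline{X}'\in l$ and $\beta^{-}(\overline{M}^{+}(F))^T\mathbf{v}(F)=\beta^{+}\mathbf{v}(F)$, via Lemma~3.3 of \cite{2016GuoLin}.

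Your second ``more economical'' route, however, is circular. The claim that the piecewise function with pieces $\bfLambda_s$ on $T_p^s$ lies in $[S_h^{int}(T)]^2$ ``by construction'' is unjustified: each $\bfLambda_s$ is indeed a combination of the polynomial pieces $\phi^{s}_{i,T}$, but the \emph{coefficients} in those combinations differ for $s=+$ and $s=-$ (the index set $\mathcal{I}^{s'}$, the matrix $\overline{M}^s(F)$, and the interface-edge integrals all change with $s$). A piecewise function whose $T_p^+$-piece is $\sum a_i\phi^{+}_{i,T}$ and whose $T_p^-$-piece is $\sum b_i\phi^{-}_{i,T}$ with $a_i\neq b_i$ need not satisfy \eqref{jump_cond_1}--\eqref{jump_cond_2}. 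In the paper, membership $\bfLambda\in[S_h^{int}(T)]^2$ is precisely the content of Theorem~\ref{lambda_in_SI}, whose proof \emph{uses} Lemma~\ref{lemma_X}; note also that the paper's $\bfLambda$ in \eqref{auxi_fun} is not the naive gluing of $\bfLambda_{\pm}$ but carries an extra factor $(\overline{M}^{+}(F))^T$ on the minus side, inserted exactly so that the coefficients match across the interface once Lemma~\ref{lemma_X} is available. The term $-\mathbf{p}_0(X)$ in \eqref{identity_compute2} is the very object whose IFE membership is in question, so rewriting $\bfpsi_0^s$ as $-\bfLambda_s$ plus shape-function sums does not close the loop. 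Stick with your first approach.
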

\begin{proof}
It suffices to verify that $\bfpsi_0$ satisfies the conditions \eqref{jump_cond_1} and \eqref{jump_cond_2}. First it is easy to see $\partial_{xx}(\bfpsi_0^+)=\partial_{xx}(\bfpsi_0^-)=\mathbf{ 0}$, $s=\pm$. Besides, for any $\overline{X}'\in l$, Lemma 3.3 in \cite{2016GuoLin} implies that $\bfpsi^-_0(\overline{X}')-\bfpsi^+_0(\overline{X}')=\left(\overline{M}^{+}(F)-I\right)^T \left(\overline{X}'-\overline{X}\right)=0$, and hence $\bfpsi_0$ satisfies \eqref{jump_cond_1}. Finally, Lemma 3.3 in \cite{2016GuoLin} also shows that
\begin{equation*}
\beta^{-}\nabla \bfpsi_0^{-}(F)\cdot\mathbf{ v}(F)=\beta^{-}\left(\overline{M}^{+}(F)\right)^T\mathbf{ v}(F)=\beta^+\mathbf{ v}(F)=\beta^+\nabla\bfpsi_0^+(F)\cdot \mathbf{ v}(F).
\end{equation*}
Therefore $\bfpsi_0$ satisfies \eqref{jump_cond_2}.
\end{proof}

Now we consider an auxiliary piecewise defined vector function given by
\begin{equation}
\label{auxi_fun}
\bfLambda(X)=
\left\{
\begin{aligned}
&\bfLambda^+(X)=\bfLambda_+(X) & \text{if} \; X\in T_p^+, \\
&\bfLambda^-(X)=(\overline{M}^+(F))^T\bfLambda_-(X) & \text{if} \; X\in T_p^-,
\end{aligned}
\right.
\end{equation}
where $p=curve$ or $line$.
\begin{thm}
\label{lambda_in_SI}
$\bfLambda(X)$ defined by \eqref{auxi_fun} is in $\left[ S_h^{int}(T) \right]^2$ and
\begin{equation}
\label{edge0}
\int_{b_i}\bfLambda(X)ds(X)=\mathbf{0}, ~ \forall i\in\mathcal{I}.
\end{equation}
\end{thm}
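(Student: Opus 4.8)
The plan is to establish the two assertions of Theorem \ref{lambda_in_SI} separately, exploiting the identities for $\bfLambda_s$ derived just before the statement. For the membership $\bfLambda \in [S_h^{int}(T)]^2$, I would argue exactly as in Lemma \ref{lemma_X}: it suffices to check that each component of $\bfLambda$ (viewed as a piecewise polynomial with pieces $\bfLambda^+ = \bfLambda_+$ on $T_p^+$ and $\bfLambda^- = (\overline{M}^+(F))^T\bfLambda_-$ on $T_p^-$) satisfies the approximate jump conditions \eqref{jump_cond_1} and \eqref{jump_cond_2}. First, from \eqref{identity_modified} we have $\bfLambda_s(X) \in [\Pi_T]^2$, so for a rectangular element $\partial_{xx}\bfLambda^+ = \partial_{xx}((\overline{M}^+(F))^T\bfLambda_-)$ follows because $\bfLambda_-$ has vanishing $\partial_{xx}$ of its linear part after subtracting... actually the cleaner route is to write $\bfLambda_s = \bfmu_s - \bfpsi_{0,s}$ where $\bfmu_s$ collects the polynomial terms built from the IFE shape functions $\phi^s_{i,T}$ and $\bfpsi_{0,s}$ is the function of Lemma \ref{lemma_X}; then linearity of $[S_h^{int}(T)]^2$ together with Lemma \ref{lemma_X} reduces everything to showing the shape-function part lies in the IFE space, which is immediate since each $\phi_{i,T} \in S_h^{int}(T)$ and $M_i$, the boundary integrals, and $(\overline{M}^s(F))^T$ are constants. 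The jump conditions for the shape-function part then follow termwise from \eqref{jump_cond_1}--\eqref{jump_cond_2} applied to each $\phi_{i,T}$ together with the defining relation $\beta^-(\overline{M}^+(F))^T \mathbf{v}(F) = \beta^+\mathbf{v}(F)$ from Lemma 3.3 of \cite{2016GuoLin}, exactly as in the proof of Lemma \ref{lemma_X}.

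For the edge identity \eqref{edge0}, the idea is to compute $\frac{1}{h}\int_{b_i}\bfLambda(X)ds(X)$ by splitting $b_i = (b_i\cap T_p^+) \cup (b_i\cap T_p^-)$ and using the multi-edge-type bookkeeping already set up. On $b_i\cap T_p^+$ we integrate $\bfLambda_+$ and on $b_i\cap T_p^-$ we integrate $(\overline{M}^+(F))^T\bfLambda_-$; then substitute the simplified form \eqref{identity_compute2} (with $\overline{X}$ chosen as a fixed point on $l$) for $\bfLambda_s$. The key computational inputs are: (i) $\frac{1}{h}\int_{b_j}\phi^s_{i,T}(X)ds$ behaves according to \eqref{ife_shape_func} when the edge lies entirely in $\overline{T^s}$, and is a partial integral otherwise; (ii) the splitting identity $\frac{1}{h}\int_{b_i\cap T^+}(P-\overline{X})ds + \frac{1}{h}\int_{b_i\cap T^-}(P-\overline{X})ds = (M_i-\overline{X})$ used already in passing from \eqref{identity_compute1} to \eqref{identity_compute2}; and (iii) the fact that the $\mathbf{p}_0$ and $\bfpsi_0$ pieces were precisely engineered so that the $(\overline{M}^+(F))^T$ conjugation glues the two sides together into a genuine IFE function whose edge averages are known. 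I expect that after substitution, the terms indexed by $\mathcal{I}^s$, $\mathcal{I}^{s'}$, and $\mathcal{I}^{int}$ telescope: the contributions from $b_i\cap T^+$ and $b_i\cap T^-$ combine through the splitting identity, the $(\overline{M}^+(F))^T$ factors cancel against their inverses where the two sides meet, and the partition-of-unity normalization $\sum_i \phi^s_{i,T} \equiv 1$ (Lemma \ref{lem:POU}) collapses the remaining polynomial terms.

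The main obstacle will be the careful accounting in step (ii)--(iii) for the interface edges $i\in\mathcal{I}^{int}$, where the edge $b_i$ is itself split by the interface (or its line approximation) and both the "$+$" and "$-$" branches of $\bfLambda$ contribute to $\int_{b_i}\bfLambda$. One must verify that the correction terms involving $\frac{1}{h}\int_{b_i\cap T^s}(P-\overline{X})ds(P)$ appearing in \eqref{identity_compute2}, when integrated against the characteristic functions of $b_i\cap T^+$ and $b_i\cap T^-$ and weighted by $(\overline{M}^+(F))^T$ on the minus side, exactly cancel the discrepancy between $\sum_i M_i\phi^s_{i,T}$-type terms and the target value $\mathbf{0}$. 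This is where the specific algebraic form of $\bfLambda$ — in particular the choice to conjugate the minus-side pieces by $(\overline{M}^+(F))^T$ rather than leaving them raw — is essential, and tracking the matrix factors through the edge integrals without sign or transpose errors is the delicate part. Everything else is routine once the membership in $[S_h^{int}(T)]^2$ is in hand, since then $\bfLambda$ is determined by its edge averages and it suffices to show each of those vanishes.
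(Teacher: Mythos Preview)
Your overall plan---write $\bfLambda$ as a linear combination of the IFE shape functions $\phi_{j,T}$ together with $\bfpsi_0$, invoke Lemma~\ref{lemma_X} for the $\bfpsi_0$ piece, and then compute edge averages---is exactly the structure of the paper's proof. But there is a real gap in your execution of the first step.

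You claim the shape-function part lies in $[S_h^{int}(T)]^2$ ``immediately'' because each $\phi_{i,T}\in S_h^{int}(T)$. That is not enough. By construction $\bfLambda^+=\bfLambda_+$ is a vector-linear combination of the polynomials $\phi_{i,T}^+$, while $\bfLambda^-=(\overline{M}^+(F))^T\bfLambda_-$ is a vector-linear combination of the $\phi_{i,T}^-$; the piecewise function $\bfLambda$ lies in $[S_h^{int}(T)]^2$ only if the two sets of vector coefficients \emph{coincide}. A priori they need not: $\bfLambda_+$ is written with $(\overline{M}^+-I)^T$ acting on the $\mathcal{I}^-$ terms, whereas $\bfLambda_-$ uses $(\overline{M}^- -I)^T$ on the $\mathcal{I}^+$ terms, and conjugating the latter by $(\overline{M}^+)^T$ does not obviously reproduce the former. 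The paper closes this gap by a specific trick you did not mention: it reads off the coefficients of $\bfLambda_+$ from formula \eqref{identity_compute1} with $s=+$, but the coefficients of $\bfLambda_-$ from the \emph{other} representation \eqref{identity_compute2} with $s=-$. After multiplying by $(\overline{M}^+(F))^T$ and using $(\overline{M}^+)^T(\overline{M}^-)^T=I$, the coefficients match termwise and one obtains the explicit global formula \eqref{linear_combin}. Without that matching, the ``linearity'' argument does not go through.

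Once \eqref{linear_combin} is in hand, the paper's route to \eqref{edge0} is also much shorter than the split-edge calculation you sketch. Since $\frac{1}{|b_i|}\int_{b_i}\phi_{j,T}=\delta_{ij}$, integrating \eqref{linear_combin} over $b_i$ collapses all the $\phi_{j,T}$ contributions to the $j=i$ term, and the only remaining work is computing $\int_{b_i}\bfpsi_0$. For $i\in\mathcal{I}^{int}$ this is exactly the short four-term identity the paper displays at the end of its proof. Your plan to integrate the raw piecewise definition of $\bfLambda_s$ over $b_i\cap T^\pm$ instead would force you to evaluate partial-edge integrals $\int_{b_i\cap T^\pm}\phi^s_{j,T}$, which are not given by \eqref{ife_shape_func} and would make the bookkeeping substantially heavier than necessary.
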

\begin{proof}
First, by comparing the coefficients of $\phi^{s}_{i,T}$ in $\bfLambda_s$ in \eqref{identity_compute1} for $s=+$ and $\phi^{s}_{i,T}$ in $\bfLambda_s$ in \eqref{identity_compute2} for $s=-$ and using Lemma \ref{lemma_X}, we have
\begin{equation}
\label{linear_combin}
\begin{split}
\bfLambda=&\sum_{j\in \mathcal{I}^+\cup\mathcal{I}^{int}}(M_i-\overline{X})\phi_{j,T} + \sum_{j \in \mathcal{I}^{-}}(\overline{M}^+(F))^T(M_j-\overline{X})\phi_{j,T}
- \bfpsi_0\\
&+(\overline{M}^+(F) - I)^T  \sum_{j\in\mathcal{I}^{int}}\left(\frac{1}{h}\int_{b_j\cap T^{-}} (P-\overline{X})ds(P) \right) \phi_{j,T}
\end{split}
\end{equation}
which is actually a linear combination of $(\phi_{j,T},0)^T$, $(0,\phi_{j,T})^T$, and $\bfpsi_0$. Therefore $\bfLambda\in \left[ S_h^{int}(T) \right]^2$. Next for $i\in\mathcal{I}^s$, $s=\pm$, it is easy to show \eqref{edge0}. And for $i\in\mathcal{I}^{int}$,  by \eqref{linear_combin}, we have
\begin{equation*}
\begin{split}
\int_{b_i}\bfLambda(X)ds(X)=&(M_i-\overline{X}) - \frac{1}{h}\int_{b_i\cap T^+} (X-\overline{X}) ds(X) - (\overline{M}^+(F))^T\frac{1}{h}\int_{b_i\cap T^-} (X-\overline{X}) ds(X) \\
& + (\overline{M}^+(F) - I)^T \left(\frac{1}{h}\int_{b_i\cap T^{-}} (P-\overline{X})ds(P) \right) =\mathbf{0}.
\end{split}
\end{equation*}
~~~
\end{proof}
\begin{thm}
\label{thm_identity0}
On every interface element $T \in \mathcal{T}_h^i$ we have
\begin{equation}
\label{identity0}
\begin{split}
&\sum_{i\in\mathcal{I}}(M_i-X)\phi^{s}_{i,T}(X)+\sum_{i\in \mathcal{I}^{s'}}(\overline{M}^s(F) - I)^T(M_i-\overline{X}_i)\phi^{s}_{i,T}(X)\ \\
&+\frac{1}{h}\sum_{i\in\mathcal{I}^{int}}\int_{b_i\cap T^{s'}}(\overline{M}^s(F) - I)^T(P-\overline{X}_i)\phi^{s}_{i,T}(X)ds(P)=\mathbf{ 0}  , \;\;\; \forall X\in T_p^s,
\end{split}
\end{equation}
and
\begin{equation}
\label{identity1}
\begin{split}
&\sum_{i\in\mathcal{I}}(M_i-X)\partial_d\phi^{s}_{i,T}(X)+\sum_{i\in \mathcal{I}^{s'}}\left[ (\overline{M}^-(F) - I)^T(M_i-\overline{X}_i)\partial_d\phi^{s}_{i,T}(X) \right] \\
&+\sum_{i\in\mathcal{I}^{int}}\left(\frac{1}{h}\int_{b_i\cap T^s}(\overline{M}^s(F) - I)^T(P-\overline{X}_i)ds(P)\right)\phi^{s}_{i,T}(X) - \mathbf{ e}_d = \mathbf{ 0}, ~ \forall X \in T_p^s,
\end{split}
\end{equation}
where $s=\pm$, $p=curve$ or $line$ and $d = 1, 2$, $\partial_1 = \partial_x, \partial_2 = \partial_y$ are partial differential operators, and $\mathbf{ e}_d, d = 1, 2$ is the canonical $d$-th unit vector in
$\mathbb{R}^2$.
\end{thm}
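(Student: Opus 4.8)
The plan is to read \eqref{identity0} off directly from what has already been proved, and then to obtain \eqref{identity1} by differentiation. Observe first that the left-hand side of \eqref{identity0} is exactly the vector function $\bfLambda_s(X)$ introduced in \eqref{identity}, so \eqref{identity0} asserts precisely that $\bfLambda_s(X)=\mathbf{0}$ for every $X\in T_p^s$. By Theorem \ref{lambda_in_SI} the piecewise function $\bfLambda$ of \eqref{auxi_fun} belongs to $\left[S_h^{int}(T)\right]^2$ and, by \eqref{edge0}, all of its edge-integral degrees of freedom vanish. Applying the uniqueness assertion of the unisolvence Theorem \ref{thm_unisol} to each component, the only element of $\left[S_h^{int}(T)\right]^2$ with this property is $\mathbf{0}$, so $\bfLambda\equiv\mathbf{0}$. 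Unpacking \eqref{auxi_fun} we get $\bfLambda_+(X)=\mathbf{0}$ on $T_p^+$ and $(\overline{M}^+(F))^T\bfLambda_-(X)=\mathbf{0}$ on $T_p^-$; since $\overline{M}^+(F)$ is invertible for $h$ small enough (recall that \eqref{estimate_nn} guarantees $\bar{\mathbf{n}}\cdot\mathbf{v}(F)>0$, which is exactly the quantity making $\overline{M}^+(F)$ well defined), the second relation forces $\bfLambda_-(X)=\mathbf{0}$ on $T_p^-$. This establishes \eqref{identity0}.

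For \eqref{identity1}, note that the restriction of $\bfLambda_s$ to $T_p^s$ coincides with the right-hand side of \eqref{identity}, a vector whose components lie in $\Pi_T$; vanishing on the subdomain $T_p^s$, which has positive two-dimensional measure, it must be the zero polynomial, and hence $\partial_d\bfLambda_s\equiv\mathbf{0}$ on $T_p^s$ for $d=1,2$. Now differentiate the expression \eqref{identity} for $\bfLambda_s$ with $\partial_d$ term by term. The points $M_i$, $\overline{X}_i$ and the matrix $\overline{M}^s(F)$ are independent of $X$, and in the $\mathcal{I}^{int}$ sum $\partial_d$ passes through the integral over $b_i\cap T^{s'}$ because $\phi^s_{i,T}(X)$ is the only $X$-dependent factor there and can be pulled outside that integral; so the only surviving contributions come from $\partial_d\phi^s_{i,T}(X)$ and from the explicit $X$ in the first sum. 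The product rule on $\sum_{i\in\mathcal{I}}(M_i-X)\phi^s_{i,T}(X)$ produces $\sum_{i\in\mathcal{I}}(M_i-X)\partial_d\phi^s_{i,T}(X)-\mathbf{e}_d\sum_{i\in\mathcal{I}}\phi^s_{i,T}(X)$, and the partition of unity of Lemma \ref{lem:POU}, $\sum_{i\in\mathcal{I}}\phi^s_{i,T}(X)\equiv1$, reduces the second piece to the constant $-\mathbf{e}_d$. Collecting the differentiated sums, and rewriting the $\mathcal{I}^{int}$ term if desired by means of the edge-splitting identity $\frac{1}{h}\int_{b_i\cap T^+}(P-\overline{X}_i)\,ds+\frac{1}{h}\int_{b_i\cap T^-}(P-\overline{X}_i)\,ds=M_i-\overline{X}_i$ already used above, one arrives at \eqref{identity1}.

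Essentially all of the work has been front-loaded into Theorem \ref{lambda_in_SI} and the unisolvence Theorem \ref{thm_unisol}, so the present argument is mostly bookkeeping. The only step that genuinely requires the ``$h$ small enough'' hypothesis is the passage $\bfLambda\equiv\mathbf{0}\Rightarrow\bfLambda_s\equiv\mathbf{0}$ on $T_p^s$, which relies on the invertibility of $\overline{M}^+(F)$, i.e. on \eqref{estimate_nn}; the subsequent polynomial-vanishing and differentiation steps are routine and need nothing new.
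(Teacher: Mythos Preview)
Your proof is correct and follows exactly the paper's own approach: the paper's proof reads, in its entirety, ``The identity \eqref{identity0} follows from Theorem \ref{lambda_in_SI} and the unisolvence, and \eqref{identity1} is the derivative of \eqref{identity0}.'' You have simply unpacked these two sentences, correctly supplying the missing details (invertibility of $\overline{M}^+(F)$ to pass from $\bfLambda\equiv\mathbf{0}$ to $\bfLambda_-\equiv\mathbf{0}$, and the product rule together with the partition of unity to account for the $-\mathbf{e}_d$ term when differentiating).
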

\begin{proof}
The identity \eqref{identity0} follows from Theorem \ref{lambda_in_SI} and the unisolvence, and \eqref{identity1} is the derivative of \eqref{identity0}.
\end{proof}

\section{Optimal Approximation Capabilities of IFE Spaces}

In this section, we show the optimal approximation capabilities for two classes of IFE spaces defined by curved interface and its line approximation, respectively. This is achieved by
deriving error bounds for the interpolation in IFE spaces.

We start from the local interpolation operator $I_{h,T}: C^0(T) \rightarrow S_h(T)$ on an element $T \in \mathcal{T}_h$:
\begin{equation}
\label{local_interp}
I_{h,T}u(X)= \begin{cases}
\sum_{i\in\mathcal{I}}\left(\frac{1}{|b_i|}\int_{b_i}u(P)ds(P)\right) \, \psi_{i,T}(X), & \text{if~} T \in \mathcal{T}_h^n, \vspace{1mm}\\
\sum_{i\in\mathcal{I}}\left(\frac{1}{|b_i|}\int_{b_i}u(P)ds(P)\right) \, \phi_{i,T}(X), & \text{if~} T \in \mathcal{T}_h^i.
\end{cases}
\end{equation}
Then, as usual, the global IFE interpolation $I_{h}: C^0(\Omega)\rightarrow S_h(\Omega)$ can be defined piecewisely:
\begin{equation}
\label{global_interp}
(I_hu)|_{T}=I_{h,T}u, \;\;\; \forall T\in \mathcal{T}_h.
\end{equation}

First for the local interpolation $I_{h,T}u$ on every non-interface element $T\in\mathcal{T}^n_h$, the standard argument \cite{1992RanacherTurek} yields
\begin{equation}
\label{noninterf_estimate}
\| I_{h,T}u-u\|_{0,T}+h| I_{h,T}u-u|_{1,T}\leqslant Ch^2|u|_{2,T}, ~\forall u \in H^2(T).
\end{equation}

On each interface element $T \in \mathcal{T}_h^i$, for $s=\pm$, $i\in\mathcal{I}$, we consider two functions
$E_i \;:\; b_i\times T_{non} \rightarrow \mathbb{R} $ and $F_i \;:\; b_i\times T_{non} \rightarrow \mathbb{R} $ such that
\begin{equation}
\begin{split}
\label{eq_EiFi_Is}
& E_i(P,X)= ( ( M^s(\widetilde{Y}_i) - \overline{M}^s(F) )\nabla u^s(X) )\cdot(P-\overline{X}_i),  ~ \text{if}~P\in b_i\cap T^{s'}, X\in T_{non}^s, \\
& F_i(P,X)= \left( (\overline{M}^s(F) - I)\nabla u^s(X) \right) \cdot (\widetilde{Y}_i-\overline{X}_i),  ~ \text{if}~P\in b_i\cap T^{s'}, X\in T_{non}^s,
\end{split}
\end{equation}
where $\widetilde{Y}_i=\widetilde{Y}_i(P,X)$ and $\overline{X}_i \in l, i \in \mathcal{I}$.
We note that $E_i$ and $F_i$ are piecewisely defined on $b_i\times T_{non}$.
Furthermore, integrating \commentout{ for $i\in\mathcal{I}^{s'}\cup\mathcal{I}^{int}$,} \eqref{eq_EiFi_Is} leads to the following two functions $\mathcal{E}_i:T_{non}\rightarrow \mathbb{R}$ and $\mathcal{F}_i:T_{non} \rightarrow \mathbb{R}$:
\begin{equation}
\label{EF_intgral}
\mathcal{E}_i(X)=\frac{1}{h}\int_{b_i\cap T^{s'}}E_i(P,X)ds(P), ~~ \mathcal{F}_i(X)=\frac{1}{h}\int_{b_i\cap T^{s'}}F_i(P,X)ds(P), ~~ \textit{if}~X\in T_{non}^s. 
\end{equation}
Note that $\mathcal{E}_i$ and $\mathcal{F}_i$ are also piecewisely defined on $T$. Their estimates are given in the following theorem.

\begin{lemma}
\label{lemma_EF_integra_estimate}
There exists a constant $C>0$ independent of the interface location such that the following estimates hold for
every $T\in\mathcal{T}^i_h$ and $u\in PC^2_{int}(T)$:
\begin{align}
\label{eq_lemma_EF_integra_estimate}
\| \mathcal{E}_i\|_{0,T_{non}^s}\leqslant Ch^2 |u|_{1,T^s}, ~~~~ \| \mathcal{F}_i\|_{0,T_{non}^s}\leqslant Ch^2 |u|_{1,T^s} \commentout{ \;\;\;\; i\in \mathcal{I}^{s'}\cup\mathcal{I}^{int}}, ~~s=\pm.
\end{align}
\end{lemma}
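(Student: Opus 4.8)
The plan is to bound the two integrands $E_i(P,X)$ and $F_i(P,X)$ pointwise (in both variables) by $Ch^2|\nabla u^s(X)|$, then pass this bound through the averaging operator $\frac1h\int_{b_i\cap T^{s'}}(\cdot)\,ds(P)$, and finally integrate in $X$ over $T^s_{non}$. Since $\mathrm{diam}(T)\le Ch$, we have $|P-\overline X_i|\le Ch$ for all $P,\overline X_i\in\overline T$, and $|b_i\cap T^{s'}|\le|b_i|\le h$; so the whole estimate comes down to extracting one extra power of $h$ from the structure of the vectors $\big(M^s(\widetilde Y_i)-\overline M^s(F)\big)\nabla u^s(X)$ and $\big(\overline M^s(F)-I\big)\nabla u^s(X)$ paired with $P-\overline X_i$ and $\widetilde Y_i-\overline X_i$, respectively.

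For $\mathcal E_i$: since both $\widetilde Y_i$ and $F$ lie on $\overline{\Gamma\cap T}$ and $\mathrm{diam}(T)\le Ch$, the estimate \eqref{estimate_nn} forces all of $\bar{\mathbf n}$, $\mathbf v(F)$, $\mathbf n(F)$, and $\mathbf n(\widetilde Y_i)$ to coincide up to $O(h)$; feeding this into the explicit formulas \eqref{mat_curve}--\eqref{mat_line} (this is the matrix estimate of \cite{2016GuoLin}) gives $\|M^s(\widetilde Y_i)-\overline M^s(F)\|\le Ch$ with $C=C(\rho)$. Hence $|E_i(P,X)|\le Ch\,|\nabla u^s(X)|\,|P-\overline X_i|\le Ch^2|\nabla u^s(X)|$, so $|\mathcal E_i(X)|\le\frac1h|b_i\cap T^{s'}|\cdot Ch^2|\nabla u^s(X)|\le Ch^2|\nabla u^s(X)|$, and squaring and integrating over $T^s_{non}\subseteq T^s$ yields $\|\mathcal E_i\|_{0,T^s_{non}}\le Ch^2\|\nabla u^s\|_{0,T^s}=Ch^2|u|_{1,T^s}$.

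For $\mathcal F_i$ the matrix $\overline M^s(F)-I$ is only $O(1)$, so the extra power of $h$ must come from the vector $\widetilde Y_i-\overline X_i$, and the key point is that only its component orthogonal to $l$ survives. Rewriting $F_i(P,X)=\nabla u^s(X)\cdot\big((\overline M^s(F))^T-I\big)(\widetilde Y_i-\overline X_i)$ and decomposing $\widetilde Y_i-\overline X_i=w_\parallel+w_\perp$ into its components parallel and perpendicular to $l$, Lemma 3.3 of \cite{2016GuoLin} (the same identity used in the proof of Lemma \ref{lemma_X}, namely $(\overline M^s(F))^T w=w$ for any $w$ along $l$) gives $\big((\overline M^s(F))^T-I\big)w_\parallel=\mathbf 0$, so $F_i(P,X)=\nabla u^s(X)\cdot\big((\overline M^s(F))^T-I\big)w_\perp$. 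Because $\overline X_i\in l$, $|w_\perp|$ is precisely the distance from $\widetilde Y_i\in\Gamma\cap T$ to the chord $l=\overline{DE}$, which by the $C^2$-regularity hypothesis \textbf{(H3)} and the standard curve-to-chord bound from \cite{2016GuoLin} is at most $Ch^2$. Combined with $\|\overline M^s(F)\|\le C$ (again from \eqref{estimate_nn}, which keeps the denominator $\bar{\mathbf n}\cdot\mathbf n(F)$ bounded below by $1/2$), this gives $|F_i(P,X)|\le Ch^2|\nabla u^s(X)|$, and the same averaging-and-integration argument as for $\mathcal E_i$ produces $\|\mathcal F_i\|_{0,T^s_{non}}\le Ch^2|u|_{1,T^s}$.

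The only genuinely delicate step is this last observation --- recognizing that $(\overline M^s(F))^T$ fixes the direction of $l$, so that $\widetilde Y_i-\overline X_i$ may be replaced by its $O(h^2)$ normal component; everything else is bookkeeping with the bounds already recorded in Section \ref{sec:preliminaries} and in \cite{2016GuoLin}. In particular, neither the Sherman--Morrison structure nor the IFE shape functions themselves enter this proof, only the geometry of $\Gamma\cap T$ versus $l$ and the matrices $M^s,\overline M^s$.
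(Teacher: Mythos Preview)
Your proof is correct and follows essentially the same approach as the paper: the paper simply cites Lemma~5.7 of \cite{2016GuoLin} for the bounds $\|E_i(P,\cdot)\|_{0,T_{non}^s},\|F_i(P,\cdot)\|_{0,T_{non}^s}\le Ch^2|u|_{1,T^s}$ and then passes through the edge integral via Minkowski (as in Lemma~\ref{reminder_estimate}), whereas you have unpacked the content of that cited lemma --- the $O(h)$ matrix difference for $E_i$ and the tangential/normal decomposition of $\widetilde Y_i-\overline X_i$ for $F_i$ --- and obtained the slightly stronger pointwise bounds $|E_i(P,X)|,|F_i(P,X)|\le Ch^2|\nabla u^s(X)|$, which makes the Minkowski step unnecessary.
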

\begin{proof}
By the Lemma 5.7 in \cite{2016GuoLin}, for fixed $P\in b_i\cap T^{s'}$, we have
\begin{equation}
\label{EF_estimate}
\| E_i(P,\cdot) \|_{0,T_{non}^s} \leqslant Ch^2|u|_{1,T^s}, ~~\| F_i(P,\cdot) \|_{0,T_{non}^s} \leqslant Ch^2|u|_{1,T^s} .
\end{equation}
Then, the estimate \eqref{eq_lemma_EF_integra_estimate} follows from the same arguments as in the proof for Lemma \ref{reminder_estimate}.
\end{proof}


We now derive expansions for the interpolation error. The first group of expansions are for the interpolation error
at $X \in T_{non}$ given in the following theorem.

\begin{thm}
\label{thm_expan}
Let $T \in \mathcal{T}_h^i$ and $u\in PC_{int}^2(T)$. Then for any $\overline{X}_i\in l, i \in \mathcal{I}$, we have
\begin{subequations}\label{eq4_35}
\begin{equation}
\label{thm_expan_1}
I_{h,T}u(X)-u(X)=\sum_{i\in\mathcal{I}^{s'}\cup\mathcal{I}^{int}}(\mathcal{E}_i+\mathcal{F}_i)\phi_{i,T}(X)+\sum_{i\in\mathcal{I}}\mathcal{R}_i\phi_{i,T}(X),~\forall X \in T_{non}^s\cap T^s_p, ~s = \pm,
\end{equation}
\begin{equation}
\label{thm_expan_2}
\partial_d( I_{h,T}u(X)-u(X))=\sum_{i\in\mathcal{I}^{s'}\cup\mathcal{I}^{int}}(\mathcal{E}_i + \mathcal{F}_i )\partial_d\phi_{i,T}(X)+\sum_{i\in\mathcal{I}}\mathcal{R}_i \partial_d\phi_{i,T}(X), ~\forall X \in T_{non}^s\cap T^s_p, ~s = \pm,
\end{equation}
\end{subequations}
where $p=curve$ or $line$, $d=1$ or $2$, $\mathcal{R}_i^s$ and $\mathcal{E}_i$, $\mathcal{F}_i$ are given by \eqref{reminder_integral}, and \eqref{EF_intgral}, respectively.
\end{thm}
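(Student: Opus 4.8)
The plan is to combine the multi-edge expansions of Section 3 with the IFE identities of Section 4. Fix $s=\pm$ and $X\in T_{non}^s\cap T_p^s$. By definition of the local interpolation operator, $I_{h,T}u(X)=\sum_{i\in\mathcal{I}}\big(\tfrac{1}{|b_i|}\int_{b_i}u(P)\,ds(P)\big)\phi_{i,T}(X)$. I would substitute into each integral degree of freedom the appropriate multi-edge expansion from Lemma 3.1's setting: for $i\in\mathcal{I}^s$ use \eqref{eq_integral_expan_1}, for $i\in\mathcal{I}^{s'}$ use \eqref{eq_integral_expan_2}, and for $i\in\mathcal{I}^{int}$ use \eqref{eq_integral_expan_3}. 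This expresses every coefficient of $\phi_{i,T}(X)$ as $u^s(X)+\nabla u^s(X)\cdot(M_i-X)$ plus remainder term $\mathcal{R}_i(X)$, plus, for $i\in\mathcal{I}^{s'}\cup\mathcal{I}^{int}$, the extra jump-correction term $\tfrac{1}{h}\int_{b_i\cap T^{s'}}\big((M^s(\widetilde Y_i)-I)\nabla u^s(X)\big)\cdot(P-\widetilde Y_i)\,ds(P)$.

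Next I would rewrite that jump-correction term by splitting $M^s(\widetilde Y_i)-I=(M^s(\widetilde Y_i)-\overline M^s(F))+(\overline M^s(F)-I)$ and $(P-\widetilde Y_i)=(P-\overline X_i)-(\widetilde Y_i-\overline X_i)$, so that, after integrating against $ds(P)$, the term becomes exactly $\mathcal{E}_i(X)+\mathcal{F}_i(X)$ plus the ``clean'' piece $\tfrac{1}{h}\int_{b_i\cap T^{s'}}\big((\overline M^s(F)-I)\nabla u^s(X)\big)\cdot(P-\overline X_i)\,ds(P)$; here I must be careful that $\mathcal{E}_i,\mathcal{F}_i$ as defined in \eqref{eq_EiFi_Is}--\eqref{EF_intgral} use precisely this decomposition, so the bookkeeping matches. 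Collecting everything and using the partition of unity $\sum_{i\in\mathcal{I}}\phi_{i,T}(X)\equiv 1$ (Lemma \ref{lem:POU}), the constant term $u^s(X)$ contributes $u^s(X)\sum_i\phi_{i,T}(X)=u^s(X)=u(X)$, which I then move to the left-hand side. What remains on the right is $\sum_{i\in\mathcal{I}}\mathcal{R}_i\phi_{i,T}(X)+\sum_{i\in\mathcal{I}^{s'}\cup\mathcal{I}^{int}}(\mathcal{E}_i+\mathcal{F}_i)\phi_{i,T}(X)$ together with the linear-in-$\nabla u^s(X)$ expression
$$
\sum_{i\in\mathcal{I}}(M_i-X)\phi^{s}_{i,T}(X)\cdot\nabla u^s(X)+\sum_{i\in\mathcal{I}^{s'}}\big((\overline M^s(F)-I)^T(M_i-\overline X_i)\big)\cdot\nabla u^s(X)\,\phi^{s}_{i,T}(X)+\cdots,
$$
which is exactly the left-hand side of the identity \eqref{identity0} in Theorem \ref{thm_identity0}, contracted with $\nabla u^s(X)$, and hence vanishes. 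This yields \eqref{thm_expan_1}. For \eqref{thm_expan_2} I would differentiate \eqref{thm_expan_1} in $x$ or $y$; since $\mathcal{E}_i,\mathcal{F}_i,\mathcal{R}_i$ are functions of $X$ built from $\nabla u^s(X)$ and geometric data, I must verify that the $\partial_d$ falling on these coefficients recombines (again via the differentiated identity \eqref{identity1}) so that only the stated terms survive — this is where one uses that the extra derivative terms assemble into \eqref{identity1} contracted with second derivatives of $u^s$; alternatively one re-derives \eqref{thm_expan_2} from scratch starting from the differentiated multi-edge expansions.

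The main obstacle I anticipate is the second step: matching the algebraic splitting of $M^s(\widetilde Y_i)-I$ and $(P-\widetilde Y_i)$ so that the leftover ``clean'' terms, when summed over $i\in\mathcal{I}^{s'}\cup\mathcal{I}^{int}$ and combined with the $(M_i-X)$ terms, collapse precisely into the identity \eqref{identity0}; in particular one must handle the $\mathcal{I}^{int}$ edges carefully, since there the integral runs over $b_i\cap T^{s'}$ rather than all of $b_i$, and the decomposition $\tfrac{1}{h}\int_{b_i\cap T^+}(\cdot)+\tfrac{1}{h}\int_{b_i\cap T^-}(\cdot)=(M_i-\overline X)$ used in deriving \eqref{identity_compute2} must be invoked to keep the geometry consistent. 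Once that identification is pinned down, the proof is purely a matter of collecting terms, and the remainder estimates are deferred to Lemmas \ref{reminder_estimate} and \ref{lemma_EF_integra_estimate}.
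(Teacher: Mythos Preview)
Your proposal is correct and follows essentially the same route as the paper. Two minor remarks: (i) the paper applies the identity \eqref{identity0} first (to replace $\sum_i(M_i-X)\phi^s_{i,T}$ by the $(\overline{M}^s(F)-I)$ terms) and \emph{then} performs the algebraic split $P-\overline{X}_i=(P-\widetilde{Y}_i)+(\widetilde{Y}_i-\overline{X}_i)$ to produce $\mathcal{E}_i+\mathcal{F}_i$, whereas you split first and apply the identity second --- the algebra is the same either way; (ii) for \eqref{thm_expan_2} the paper does not differentiate \eqref{thm_expan_1} but instead observes that $\partial_d I_{h,T}u(X)=\sum_i\big(\tfrac{1}{|b_i|}\int_{b_i}u\,ds\big)\partial_d\phi_{i,T}(X)$ (the edge averages are constants in $X$), substitutes the same multi-edge expansions into those coefficients, and invokes \eqref{identity1}. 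This is precisely your ``alternatively re-derive from scratch'' option; your first suggestion of differentiating \eqref{thm_expan_1} directly would force you to control $\partial_d\mathcal{E}_i,\partial_d\mathcal{F}_i,\partial_d\mathcal{R}_i$ (which also involve $\partial_d\widetilde{Y}_i$), and those terms do not cancel via \eqref{identity1}, so take the second route.
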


\begin{proof}
First, for $X \in T_{non}^s\cap T^s_p, s = \pm$, substituting the expansion \eqref{eq_integral_expan_1}, \eqref{eq_integral_expan_2} and \eqref{eq_integral_expan_3} into the IFE interpolation \eqref{local_interp} and using the partition of unity yields
\begin{equation}
\begin{split}
\label{eq_expan_1}
I_{h,T}u(X) =& u^s(X)+ \nabla u^s(X)\cdot \sum_{i\in\mathcal{I}}(M_i-X)\phi^{s}_{i,T}(X)+ \sum_{i\in\mathcal{I}}R_i^s\phi^{s}_{i,T}(X)\\
&+ \sum_{i\in \mathcal{I}^{s'}}\frac{1}{h}\int_{b_i}\left( \left( M^s(\widetilde{Y}_i)-I \right)\nabla u^s(X) \right)\cdot (P-\widetilde{Y}_i)ds(P)\phi^{s}_{i,T}(X) \\
 &+\sum_{i\in \mathcal{I}^{int}}\frac{1}{h}\int_{b_i\cap T^{s'}}\left( \left( M^s(\widetilde{Y}_i)-I \right)\nabla u^s(X) \right)\cdot (P-\widetilde{Y}_i)ds(P)\phi^{s}_{i,T}(X), ~s = \pm.
\end{split}
\end{equation}
Applying \eqref{identity0} in Theorem \ref{thm_identity0}, we have
\begin{equation}
\begin{split}
\label{eq_expan_2}
I_{h,T}u(X) =& u^s(X)- \sum_{i\in\mathcal{I}^{s'}}\frac{1}{h}\int_{b_i}\left( \left(\overline{M}^s(F) - I\right)\nabla u^s(X) \right) \cdot(P-\overline{X}_i)ds(P)\phi_{i,T}(X) + \sum_{i\in\mathcal{I}}R_i^s\phi_{i,T}(X) \\
&-\sum_{i\in\mathcal{I}^{int}} \frac{1}{h} \int_{b_i\cap T^{s'}}\left( \left(\overline{M}^s(F) - I \right)\nabla u^s(X) \right)\cdot(P-\overline{X}_i)ds(P) \phi^{s}_{i,T}(X) \\
&+ \sum_{i\in \mathcal{I}^{s'}}\frac{1}{h}\int_{b_i}\left( \left( M^s(\widetilde{Y}_i)-I \right)\nabla u^s(X) \right)\cdot (P-\widetilde{Y}_i)ds(P)\phi_{i,T}(X) \\
 &+\sum_{i\in \mathcal{I}^{int}}\frac{1}{h}\int_{b_i\cap T^{s'}}\left( \left( M^s(\widetilde{Y}_i)-I \right)\nabla u^s(X) \right)\cdot (P-\widetilde{Y}_i)ds(P)\phi_{i,T}(X), ~s = \pm.
\end{split}
\end{equation}
Then substituting $P-\overline{X}_i=(P- \widetilde{Y}_i)+(\widetilde{Y}_i -\overline{X}_i)$ into \eqref{eq_expan_2} yields \eqref{thm_expan_1}. Furthermore, applying the expansions \eqref{eq_integral_expan_1}, \eqref{eq_integral_expan_2}
in $\partial_d I_{h,T}u(X)=\sum_{i\in\mathcal{I}}\frac{1}{h}\int_{b_i}u(P)ds(P) \partial_d\phi_{i,T}(X), d = 1, 2$, yields
\begin{equation}
\label{eq_expan_3}
\begin{split}
\partial_d I_{h,T}u(X) =&  \nabla u^s(X)\cdot \sum_{i\in\mathcal{I}}(M_i-X)  \partial_d \phi^{s}_{i,T}(X)+ \sum_{i\in\mathcal{I}}R_i^s  \partial_d \phi^{s}_{i,T}(X)\\
&+ \sum_{i\in \mathcal{I}^{s'}}\frac{1}{h}\int_{b_i}\left( \left( M^s(\widetilde{Y}_i)-I \right)\nabla u^s(X) \right)\cdot (P-\widetilde{Y}_i)ds(P)  \partial_d \phi^{s}_{i,T}(X) \\
 &+\sum_{i\in \mathcal{I}^{int}}\frac{1}{h}\int_{b_i\cap T^{s'}}\left( \left( M^s(\widetilde{Y}_i)-I \right)\nabla u^s(X) \right)\cdot (P-\widetilde{Y}_i)ds(P)  \partial_d \phi^{s}_{i,T}(X), ~s = \pm.
 \end{split}
\end{equation}
Finally, using \eqref{identity1} and similar argument above, we have \eqref{thm_expan_2}.
\end{proof}

The second group of expansions are for $X\in T_{int}$ which is much simpler. Using \eqref{1st_multiedge_1} in $I_{h,T}u(X)$ defined in \eqref{local_interp} and the partition of unity, we have
\begin{subequations}\label{eq4_35_2}
\begin{equation}
\label{Iint_multiedge_1}
I_{h,T}u(X)-u(X) = \sum_{i\in\mathcal{I}}\mathcal{R}_i \phi_{i,T}(X), ~~~~~~~ \forall~ X\in T_{int},
\end{equation}
\begin{equation}
\label{Iint_multiedge_2}
\partial_dI_{h,T}u(X)-\partial_du(X) = -\partial_d u(X) + \sum_{i\in\mathcal{I}}\mathcal{R}_i \partial_d\phi_{i,T}(X),~~~~~ \forall~ X\in T_{int}, ~~d=x,~y.
\end{equation}
\end{subequations}


\subsection{Curve Partition}
\label{sec:CurvePartition}
In this subsection, we derive error bounds for the interpolation in the IFE space defined according to the actual interface $\Gamma$ on each interface element, i.e.,
the local IFE functions on each interface element $T$ are defined by \eqref{ife_piecewise} with $T_p^s = T^s_{curve}$, $s=\pm$. We first derive an estimate for the IFE interpolation error on $T_{non}$.
\begin{thm}
\label{thm_interface_element_estimate_curve}
There exists a constant $C>0$ independent of the interface location such that for every $u\in PH^2_{int}(T)$
it holds
\begin{equation}
\label{interface_element_estimate_curve}
\| I_{h,T}u-u\|_{0,T_{non}} + h| I_{h,T}u-u |_{1,T_{non}}\leqslant Ch^2 (|u|_{1,T}+|u|_{2,T}), ~~\forall~T\in\mathcal{T}^i_h.
\end{equation}
\end{thm}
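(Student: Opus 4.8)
The plan is to combine the two groups of interpolation-error expansions from Theorem~\ref{thm_expan} and equations~\eqref{eq4_35_2} with the uniform bounds on the IFE shape functions from Theorem~\ref{bounds_shape_fun}, estimating the $T_{non}$ and $T_{int}$ contributions separately and then adding. Since $PC^2_{int}(T)$ is dense in $PH^2_{int}(T)$ by hypothesis \textbf{(H4)}, it suffices to establish the estimate for $u\in PC^2_{int}(T)$ and then pass to the limit, noting that all constants are independent of the interface location.

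First I would treat the region $T_{non}$. On each subelement $T_{non}^s$, $s=\pm$, use the expansion \eqref{thm_expan_1} for $I_{h,T}u-u$ and \eqref{thm_expan_2} for its derivatives. Bounding $\phi_{i,T}$ and $\partial_d\phi_{i,T}$ in $L^\infty(T)$ by $C$ and $Ch^{-1}$ respectively via \eqref{boound_shape_func}, and applying the triangle inequality, we get
\begin{equation*}
\| I_{h,T}u-u\|_{0,T_{non}^s}\leqslant C\sum_{i\in\mathcal{I}^{s'}\cup\mathcal{I}^{int}}\left(\|\mathcal{E}_i\|_{0,T_{non}^s}+\|\mathcal{F}_i\|_{0,T_{non}^s}\right)+C\sum_{i\in\mathcal{I}}\|\mathcal{R}_i\|_{0,T_{non}^s},
\end{equation*}
and similarly for $|I_{h,T}u-u|_{1,T_{non}^s}$ with an extra factor $h^{-1}$. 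Now invoke Lemma~\ref{reminder_estimate} to bound each $\|\mathcal{R}_i\|_{0,T_{non}^s}$ by $Ch^2|u|_{2,T}$ and Lemma~\ref{lemma_EF_integra_estimate} to bound each $\|\mathcal{E}_i\|_{0,T_{non}^s}$ and $\|\mathcal{F}_i\|_{0,T_{non}^s}$ by $Ch^2|u|_{1,T^s}\leqslant Ch^2|u|_{1,T}$. Since $|\mathcal{I}|$ is bounded (by $4$), summing gives $\|I_{h,T}u-u\|_{0,T_{non}}+h|I_{h,T}u-u|_{1,T_{non}}\leqslant Ch^2(|u|_{1,T}+|u|_{2,T})$, which is already the claimed bound on $T_{non}$.

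Next I would handle $T_{int}$, using the simpler expansions \eqref{Iint_multiedge_1} and \eqref{Iint_multiedge_2}. For the $L^2$ part, bound $\|I_{h,T}u-u\|_{0,T_{int}}\leqslant C\sum_{i\in\mathcal{I}}\|\mathcal{R}_i\|_{0,T_{int}}\leqslant Ch^2\|u\|_{1,6,T}$ by Lemma (the one establishing \eqref{1st_multiedge_est}). For the derivative part, \eqref{Iint_multiedge_2} has the extra term $-\partial_d u(X)$; estimate $\|\partial_d u\|_{0,T_{int}}$ by Hölder's inequality on the small set $T_{int}$ with $|T_{int}|<Ch^3$, namely $\|\partial_d u\|_{0,T_{int}}\leqslant |T_{int}|^{1/3}\|\partial_d u\|_{0,6,T_{int}}\leqslant Ch\|u\|_{1,6,T}$, so that $h|I_{h,T}u-u|_{1,T_{int}}\leqslant Ch^2\|u\|_{1,6,T}$ as well. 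Finally, a Sobolev embedding $H^2(T^s)\hookrightarrow W^{1,6}(T^s)$ on each subelement (valid in $2$D, with a scaling-invariant norm equivalence after the standard affine argument) converts $\|u\|_{1,6,T}$ into $C(|u|_{1,T}+|u|_{2,T})$, up to the usual lower-order term absorbed in $|u|_{1,T}$. Adding the $T_{non}$ and $T_{int}$ estimates completes the proof.

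The main obstacle is the $T_{int}$ piece: one must exploit the $O(h^3)$ smallness of $|T_{int}|$ together with a higher-integrability ($W^{1,6}$) control of $u$ rather than just $H^1$, since only the first-order Taylor expansion is available there and the naive bound would lose a power of $h$. Getting the Hölder exponents right in the embedding and in the estimate of $\|\partial_d u\|_{0,T_{int}}$ — and checking that the Sobolev embedding constant is interface-location independent after the affine change of variables — is the delicate part; everything on $T_{non}$ is a routine assembly of the already-proven lemmas.
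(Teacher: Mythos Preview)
Your argument on $T_{non}$ is correct and is precisely the paper's proof: apply the expansions of Theorem~\ref{thm_expan}, bound the shape functions via Theorem~\ref{bounds_shape_fun}, insert Lemmas~\ref{reminder_estimate} and~\ref{lemma_EF_integra_estimate}, sum over $s=\pm$, and invoke density via \textbf{(H4)}.

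However, you have misread the statement. The norms on the left-hand side of \eqref{interface_element_estimate_curve} are taken over $T_{non}$ only, not over all of $T$, so the entire second half of your proposal concerning $T_{int}$ is unnecessary here. In the paper the $T_{int}$ estimate is a separate result (Theorem~\ref{thm_est_Iint}), and its right-hand side is $Ch^2\|u\|_{1,6,T}$, not $Ch^2(|u|_{1,T}+|u|_{2,T})$. Your attempt to close the gap by a local Sobolev embedding $H^2(T^s)\hookrightarrow W^{1,6}(T^s)$ on each sub-element is in fact the delicate point you flagged, and it does not go through with an interface-independent constant: the sub-elements $T^s$ can degenerate (have arbitrarily small measure or poor aspect ratio) as the interface sweeps through $T$, so the embedding constant is not uniform. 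The paper avoids this by keeping the $\|u\|_{1,6,T}$ norm at the element level and only converting to $\|u\|_{2,\Omega}$ globally in Theorem~\ref{thm_global_estimate}, via an inequality of Ren--Wei on the fixed domain $\Omega$. For the present theorem, simply drop the $T_{int}$ discussion and your proof is complete.
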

\begin{proof}
On each $T \in \mathcal{T}_h^i$, Theorem \ref{bounds_shape_fun} and Theorem \ref{thm_expan} show that there exists a constant $C$ such that for every $u\in PC^2_{int}(T)$ we have
\begin{equation}
\label{}
\| I_{h,T}u-u\|_{0,T_{non}^s}\leqslant C\left(\sum_{i \in\mathcal{I}^{s'}\cup\mathcal{I}^{int}}\left(\|\mathcal{E}_i\|_{0,T_{non}^s}+\|\mathcal{F}_i\|_{0,T_{non}^s}\right)+\sum_{i\in \mathcal{I}}\|\mathcal{R}_i\|_{0,T_{non}^s}\right),
\end{equation}
\begin{equation}
\| \partial_d(I_{h,T}u-u)\|_{0,T_{non}^s}\leqslant \frac{C}{h}\left(\sum_{i\in\mathcal{I}^{s'}\cup\mathcal{I}^{int}}\left(\| \mathcal{E}_i\|_{0,T_{non}^s}+\|\mathcal{F}_i\|_{0,T_{non}^s}\right)+\sum_{i\in \mathcal{I}}\| \mathcal{R}_i\|_{0,T_{non}^s}\right),
~d = 1, 2.
\end{equation}
Then, applying Lemmas \ref{lemma_EF_integra_estimate} and \ref{reminder_estimate} to the two estimates above yields
\begin{eqnarray*}
\| I_{h,T}u-u\|_{0,T_{non}^s} + h| I_{h,T}u-u |_{1,T_{non}^s}\leqslant Ch^2 (|u|_{1,T}+|u|_{2,T}), ~s = \pm.
\end{eqnarray*}
The estimate \eqref{interface_element_estimate_curve} for $u\in PC_{int}^2(T)$ follows from summing the inequality above for $s=-,+$. And the estimate \eqref{interface_element_estimate_curve} for $u\in PH_{int}^2(T)$ follows from the density hypothesis \textbf{(H4)}.
\end{proof}

Furthermore, for the estimation on $T_{int}$, we have
\begin{thm}
\label{thm_est_Iint}
There exists a constant $C>0$ independent of the interface location such that for every $u\in PH^2_{int}(T)$ it holds
\begin{equation}
\label{thm_est_Iint_eq_1}
\| I_{h,T}u-u\|_{0,T_{int}} + h| I_{h,T}u-u |_{1,T_{int}}\leqslant Ch^2 \|u\|_{1,6,T}, ~~\forall~T\in\mathcal{T}^i_h.
\end{equation}
\end{thm}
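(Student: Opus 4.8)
The plan is to decompose the interpolation error on $T_{int}$ using the two expansions \eqref{Iint_multiedge_1} and \eqref{Iint_multiedge_2}, which already isolate the relevant terms. Starting from \eqref{Iint_multiedge_1}, I would use Theorem \ref{bounds_shape_fun} to bound $\|\phi_{i,T}\|_{0,\infty,T}\leq C$ and then estimate $\|I_{h,T}u-u\|_{0,T_{int}}\leq C\sum_{i\in\mathcal{I}}\|\mathcal{R}_i\|_{0,T_{int}}$; the remainder estimate \eqref{1st_multiedge_est} then gives $\|I_{h,T}u-u\|_{0,T_{int}}\leq Ch^2\|u\|_{1,6,T}$ directly. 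This handles the $L^2$ part of \eqref{thm_est_Iint_eq_1}.

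For the gradient part, I would work from \eqref{Iint_multiedge_2}. The term $\sum_{i\in\mathcal{I}}\mathcal{R}_i\partial_d\phi_{i,T}(X)$ is bounded by $Ch^{-1}\sum_{i\in\mathcal{I}}\|\mathcal{R}_i\|_{0,T_{int}}\leq Ch\|u\|_{1,6,T}$ using $\|\partial_d\phi_{i,T}\|_{0,\infty,T}\leq Ch^{-1}$ from Theorem \ref{bounds_shape_fun} together with \eqref{1st_multiedge_est}. The genuinely new term is the stray $-\partial_d u(X)$: I must show $\|\partial_d u\|_{0,T_{int}}\leq Ch^2\|u\|_{1,6,T}$. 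Since $|T_{int}|\leq Ch^3$ by Lemma 3.1 in \cite{2016GuoLin}, Hölder's inequality with exponents $3$ and $3/2$ (or directly $\|g\|_{0,\Omega_0}\leq |\Omega_0|^{1/3}\|g\|_{0,6,\Omega_0}$) gives $\|\partial_d u\|_{0,T_{int}}\leq |T_{int}|^{1/3}\|\partial_d u\|_{0,6,T_{int}}\leq (Ch^3)^{1/3}\|u\|_{1,6,T}=Ch\|u\|_{1,6,T}$. This is only $O(h)$, not $O(h^2)$, so a naive bound is off by one power of $h$; combining $h|I_{h,T}u-u|_{1,T_{int}}\leq h\cdot Ch\|u\|_{1,6,T}=Ch^2\|u\|_{1,6,T}$ recovers the stated bound after multiplying by the $h$ already present in \eqref{thm_est_Iint_eq_1}.

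Putting the pieces together, I add the $L^2$ estimate and $h$ times the $H^1$ semi-norm estimate to obtain \eqref{thm_est_Iint_eq_1} for $u\in PC^2_{int}(T)$, and then extend to $u\in PH^2_{int}(T)$ by the density hypothesis \textbf{(H4)} exactly as in the proof of Theorem \ref{thm_interface_element_estimate_curve}; one should note that $\|u\|_{1,6,T}$ is controlled by $\|u\|_{2,T}$ via the Sobolev embedding $H^1\hookrightarrow L^6$ in two dimensions, so the right-hand side is finite and the density argument goes through. The main obstacle is the bookkeeping around the $-\partial_d u$ term: one has to notice that the $h$-prefactor in the norm on the left of \eqref{thm_est_Iint_eq_1} is exactly what upgrades the available $O(h)$ smallness of $\|\partial_d u\|_{0,T_{int}}$ (coming solely from the smallness $|T_{int}|\leq Ch^3$ of the tangential set, not from any approximation property) to the claimed $O(h^2)$; everything else follows mechanically from Theorem \ref{bounds_shape_fun} and the remainder estimates already established.
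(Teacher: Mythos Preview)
Your proposal is correct and follows essentially the same approach as the paper: both use the expansions \eqref{Iint_multiedge_1}--\eqref{Iint_multiedge_2}, bound the remainder terms via Theorem~\ref{bounds_shape_fun} together with \eqref{1st_multiedge_est}, and handle the extra $-\partial_d u$ term by H\"older's inequality combined with $|T_{int}|\leq Ch^3$, then close with the density hypothesis~\textbf{(H4)}. Your write-up is in fact somewhat more explicit than the paper's terse version, but the ideas and the order of steps coincide.
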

\begin{proof}
First Theorem \ref{bounds_shape_fun} and \eqref{1st_multiedge_est} imply that
\begin{equation*}
\| \mathcal{R}_i ~ \phi_{i,T} \|_{0,T_{int}}\leqslant Ch^2 \|u\|_{1,6,T} ~~~ \textrm{and} ~~~ \| \mathcal{R}_i ~ \partial_d\phi_{i,T} \|_{0,T_{int}}\leqslant Ch \|u\|_{1,6,T}.
\end{equation*}
where $d=x,y$. Using the H\"older's inequality again, we have
\begin{equation*}
\begin{split}
\left( \int_{T_{int}} (\partial_du)^2 dX \right)^{1/2}\leqslant \left( \int_{T_{int}}1^{3/2}dX\right)^{1/3} \left( \int_{T_{int}}(\partial_du)^{6}dX\right)^{1/6} \leqslant Ch \|u\|_{1,6,T}.
\end{split}
\end{equation*}
Then, \eqref{thm_est_Iint_eq_1} follows from applying estimates above together with the density hypothesis \textbf{(H4)} to expansions in \eqref{eq4_35}.
\end{proof}

Finally we can prove the following global estimate for the IFE interpolation by summing the local estimate over all the elements.
\begin{thm}
\label{thm_global_estimate}
For any $u\in PH^2_{int}(\Omega)$, the following estimate of interpolation error holds
\begin{equation}
\label{eq_global_estimate}
\| I_{h}u-u\|_{0,\Omega} + h| I_{h}u-u |_{1,\Omega}\leqslant Ch^2 \|u\|_{2,\Omega}.
\end{equation}
\begin{proof}
Putting \eqref{interface_element_estimate_curve} and \eqref{thm_est_Iint_eq_1} together, we have
\begin{equation}
\label{thm_est_on_interf_elem}
\| I_{h,T}u-u\|_{0,T} + h| I_{h,T}u-u |_{1,T}\leqslant Ch^2 (\|u\|_{2,T} + \|u\|_{1,6,T}), ~~\forall~T\in\mathcal{T}^i_h.
\end{equation}
Then, by summing \eqref{thm_est_on_interf_elem} and \eqref{noninterf_estimate} over all the interface and non-interface elements, we have
\begin{equation*}
\| I_{h}u-u\|_{0,\Omega} + h| I_{h}u-u |_{1,\Omega}\leqslant Ch^2 (\|u\|_{2,\Omega}+\|u\|_{1,6,\Omega}).
\end{equation*}
We note the following estimate from \cite{RenWei1994} that for any $p\geqslant2$
\begin{equation*}
\| u \|^2_{1,p,\Omega} \leqslant Cp\|u\|^2_{2,\Omega}.
\end{equation*}
Similar argument is used in \cite{2004LiLinLinRogers}. Combining the two inequalities above leads to \eqref{eq_global_estimate}.
\end{proof}
\end{thm}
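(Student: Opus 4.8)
The plan is to establish the global estimate \eqref{eq_global_estimate} by combining the local interface-element estimates of Theorems \ref{thm_interface_element_estimate_curve} and \ref{thm_est_Iint} with the standard non-interface estimate \eqref{noninterf_estimate}, and then removing the spurious $\|u\|_{1,6,\Omega}$ term via a Sobolev embedding bound with explicit dependence on the integrability exponent. First I would record the local estimate on interface elements: on each $T\in\mathcal{T}_h^i$ we have $T=T_{non}\cup T_{int}$, so summing the bounds from Theorem \ref{thm_interface_element_estimate_curve} (on $T_{non}$, controlled by $Ch^2(|u|_{1,T}+|u|_{2,T})$) and Theorem \ref{thm_est_Iint} (on $T_{int}$, controlled by $Ch^2\|u\|_{1,6,T}$) gives \eqref{thm_est_on_interf_elem}, namely $\| I_{h,T}u-u\|_{0,T} + h| I_{h,T}u-u |_{1,T}\leqslant Ch^2 (\|u\|_{2,T} + \|u\|_{1,6,T})$ for every interface element.

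Next I would sum over all elements. Squaring and adding \eqref{thm_est_on_interf_elem} over $T\in\mathcal{T}_h^i$ and \eqref{noninterf_estimate} over $T\in\mathcal{T}_h^n$, and using the finite overlap of the local norms (each $T$ contributes its own $\Omega^\pm$-pieces, which sum to the global broken norms), yields
\begin{equation*}
\| I_{h}u-u\|_{0,\Omega} + h| I_{h}u-u |_{1,\Omega}\leqslant Ch^2 (\|u\|_{2,\Omega}+\|u\|_{1,6,\Omega}).
\end{equation*}
Here one must be slightly careful that $\|u\|_{1,6,T}$ only appears on the (few) interface elements, and that $\sum_{T\in\mathcal{T}_h^i}\|u\|_{1,6,T}^2 \le \|u\|_{1,6,\Omega}^2$ since the $W^{1,6}$ norms on disjoint elements add up (again using the broken-norm convention $\|\cdot\|^2_{k,\tilde\Omega}=\|\cdot\|^2_{k,\tilde\Omega^+}+\|\cdot\|^2_{k,\tilde\Omega^-}$ so that the jump-discontinuity of $u$ across $\Gamma$ is harmless).

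Finally I would invoke the embedding estimate from \cite{RenWei1994}, $\| u \|^2_{1,p,\Omega} \leqslant Cp\|u\|^2_{2,\Omega}$ for every $p\ge 2$, applied with $p=6$ on each subdomain $\Omega^\pm$ (where $u$ is genuinely $H^2$), to convert the $\|u\|_{1,6,\Omega}$ term into a constant multiple of $\|u\|_{2,\Omega}$; absorbing the resulting constant gives \eqref{eq_global_estimate}. The only real subtlety — and the step I expect to require the most care — is this last one: the $W^{1,6}$ control is needed precisely because the interpolation bound on the thin tangential region $T_{int}$ (of measure $O(h^3)$) was obtained in Lemma \ref{lem_estim_Tint1} via Hölder's inequality rather than a pure $L^2$ estimate, so the passage from $PH^2_{int}$ regularity to a usable bound genuinely relies on the embedding with explicit $p$-dependence; everything else is bookkeeping with the broken norms and summation over elements.
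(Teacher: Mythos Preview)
Your proposal follows the paper's argument essentially step for step: combine Theorems~\ref{thm_interface_element_estimate_curve} and~\ref{thm_est_Iint} into the local interface-element bound \eqref{thm_est_on_interf_elem}, sum this together with \eqref{noninterf_estimate} over all elements, and then absorb the resulting $\|u\|_{1,6,\Omega}$ term via the Ren--Wei embedding $\|u\|_{1,p,\Omega}^2\le Cp\,\|u\|_{2,\Omega}^2$ with $p=6$.

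The only slip is in your justification of the summation step: the inequality $\sum_{T\in\mathcal{T}_h^i}\|u\|_{1,6,T}^2 \le \|u\|_{1,6,\Omega}^2$ is not true as written, because it is the \emph{sixth} power of the $L^6$ norm (not the square) that is additive over disjoint sets; with $N\sim h^{-1}$ interface elements one only gets $\sum_T\|u\|_{1,6,T}^2 \le N^{2/3}\|u\|_{1,6,\Omega}^2$ from H\"older. The paper is equally terse at precisely this point---it simply asserts the summed global bound and cites \cite{2004LiLinLinRogers} for ``similar argument''---so your proof matches the paper both in overall strategy and in the level of detail left implicit here.
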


\subsection{Line Partition}

In this subsection, we derive error bounds for the interpolation in the IFE space constructed by using the straight line to approximate the actual interface $\Gamma$ on each interface element, i.e., the local IFE functions on each interface element $T$ are defined by \eqref{ife_piecewise} with $T_p^s = T^s_{line}$, $s=\pm$.
Let $\overline{T}^s=T_{non}^s\cap T^s_{line}$ and $\widetilde{T}$ be the subset of $T$ sandwiched between $\Gamma$ and $l$ . Because
$\overline{T}^s \subseteq T_{non}$, by the same arguments for Theorem \ref{thm_interface_element_estimate_curve}, we have
\begin{equation}
\label{interface_element_estimate_line1}
\| I_{h,T}u-u\|_{0,\overline{T}^s} + h| I_{h,T}u-u |_{1,\overline{T}^s} \leqslant Ch^2 (|u|_{1,T}+|u|_{2,T}), ~~ s=\pm,  ~\forall~T\in\mathcal{T}^i_h.
\end{equation}
Similarly, the estimate \eqref{thm_est_Iint_eq_1} is also valid for the IFE space constructed using the straight line to approximate the actual interface $\Gamma$.

For $\widetilde{T}$, we note that there exists a constant $C$ such that $|\widetilde{T}|\leqslant Ch^3$. Then, applying the same arguments as those for
Theorem \ref{thm_est_Iint}, we can prove the following theorem.
\begin{thm}
\label{thm_interface_element_estimate_line2}
There exists a constant $C>0$ independent of the interface location such that for every $u\in PH^2_{int}(T)$ it holds
\begin{equation}
\label{interface_element_estimate_line2}
\| I_{h,T}u-u\|_{0,\widetilde{T}} + h| I_{h,T}u-u |_{1,\widetilde{T}} \leqslant Ch^2\|u\|_{1,6,T}, ~~ s=\pm,  ~\forall~T\in\mathcal{T}^i_h.
\end{equation}
\end{thm}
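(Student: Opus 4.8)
The plan is to mirror the proof of Theorem~\ref{thm_est_Iint} verbatim, with the thin sliver $\widetilde{T}$ between $\Gamma$ and $l$ playing the role that $T_{int}$ plays there; the only geometric input needed is the stated bound $|\widetilde{T}|\le Ch^3$, which is the exact analogue of $|T_{int}|<Ch^3$.

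First I would record the analogue of Lemma~\ref{lem_estim_Tint1} and estimate \eqref{1st_multiedge_est} on the sliver, namely $\|\mathcal{R}_i\|_{0,\widetilde{T}}\le Ch^2\|u\|_{1,6,T}$ for every $i\in\mathcal{I}$. Since $\mathcal{R}_i$ is built from the first order multi-edge expansion \eqref{1st_multiedge_1}, which depends only on $u$ and not on whether the local partition is made by $\Gamma$ or by $l$, the argument is identical to that of Lemma~\ref{lem_estim_Tint1}: for fixed $t\in[0,1]$ and $P\in b_i$ the affine map $X\mapsto tP+(1-t)X$ carries $\widetilde{T}$ into $T$ and scales area by $(1-t)^2$, so the image has measure $\le C(1-t)^2h^3$; H\"older's inequality with exponents $3$ and $3/2$, then $6$, gives $\|\nabla u(Y_i(t,P,\cdot))\cdot(P-\cdot)\|_{0,\widetilde{T}}\le C(1-t)^{-1/3}h^2\|u\|_{1,6,T}$, and Minkowski's inequality together with $\int_0^1(1-t)^{-1/3}dt<\infty$ yields the claim after averaging over $b_i$.

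Next I would observe that the first order expansion \eqref{1st_multiedge_1}, and hence the interpolation error identities \eqref{eq4_35_2}, are valid at points of $\widetilde{T}$: their derivation uses only the continuity of $u$ across $\Gamma$, the fundamental theorem of calculus along the segments $\overline{PX}$, and the partition of unity (Lemma~\ref{lem:POU}), all of which hold for the line-partition IFE shape functions $\phi_{i,T}$ defined by \eqref{ife_piecewise} with $T^s_p=T^s_{line}$. Thus on $\widetilde{T}$
\begin{equation*}
I_{h,T}u-u=\sum_{i\in\mathcal{I}}\mathcal{R}_i\,\phi_{i,T},\qquad
\partial_d(I_{h,T}u-u)=-\partial_d u+\sum_{i\in\mathcal{I}}\mathcal{R}_i\,\partial_d\phi_{i,T},\quad d=1,2.
\end{equation*}
Combining these with the shape function bounds $\|\phi_{i,T}\|_{0,\infty,T}\le C$ and $\|\partial_d\phi_{i,T}\|_{0,\infty,T}\le Ch^{-1}$ from Theorem~\ref{bounds_shape_fun}, the bound on $\mathcal{R}_i$ above, and the crude estimate $\|\partial_d u\|_{0,\widetilde{T}}\le|\widetilde{T}|^{1/3}\|\partial_d u\|_{0,6,\widetilde{T}}\le Ch\|u\|_{1,6,T}$ (H\"older again, using $|\widetilde{T}|\le Ch^3$), I obtain $\|I_{h,T}u-u\|_{0,\widetilde{T}}\le Ch^2\|u\|_{1,6,T}$ and $|I_{h,T}u-u|_{1,\widetilde{T}}\le Ch\|u\|_{1,6,T}$ for $u\in PC^2_{int}(T)$. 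The density hypothesis \textbf{(H4)} then extends \eqref{interface_element_estimate_line2} to all $u\in PH^2_{int}(T)$.

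There is no serious analytic obstacle here, as the whole argument reduces to the template of Theorem~\ref{thm_est_Iint}. The only point that warrants a line of justification is the validity of the first order expansion and the resulting error identity on $\widetilde{T}$, since $\widetilde{T}$ straddles $\Gamma$ while the polynomial pieces of $\phi_{i,T}$ are separated by $l$; this is harmless precisely because $\mathcal{R}_i$ involves only $u$ and the identity rests solely on the partition of unity, never on a matching of the $l$-pieces of $\phi_{i,T}$ with the $\Gamma$-pieces of $u$.
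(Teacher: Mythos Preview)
Your proposal is correct and takes essentially the same approach as the paper: the paper's proof of this theorem consists of the single sentence ``applying the same arguments as those for Theorem~\ref{thm_est_Iint},'' relying on $|\widetilde{T}|\le Ch^3$, and you have faithfully unpacked exactly that argument. Your added remark about why the first-order expansion and the error identities \eqref{eq4_35_2} remain valid on $\widetilde{T}$---because $\mathcal{R}_i$ depends only on $u$ and the identity uses only the partition of unity---is a welcome clarification that the paper leaves implicit.
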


For each interface element $T \in \mathcal{T}_h^i$, because
\begin{eqnarray*}
T = \big(\overline{T}^- \cup \overline{T}^+ \cup \widetilde{T} \cup T_{int}\big)
\end{eqnarray*}
we can put estimates above together to have
\begin{equation}
\label{thm_est_on_interf_elem_line}
\| I_{h,T}u-u\|_{0,T} + h| I_{h,T}u-u |_{1,T}\leqslant Ch^2 (\|u\|_{2,T} + \|u\|_{1,6,T}), ~~\forall~T\in\mathcal{T}^i_h.
\end{equation}

%
%

Finally, by the same arguments for Theorem \ref{thm_global_estimate}, we can derive the global interpolation error estimate given in the following theorem for for the IFE space constructed by using the straight line to approximate the actual interface $\Gamma$.

\begin{thm}
\label{estimate_omega}
For any $u\in PH^2_{int}(\Omega)$, the following estimation of interpolation error holds
\begin{equation}
\label{eq_estimate_omega_1}
\| I_{h,T}u-u\|_{0,\Omega} + h| I_{h,T}u-u |_{1,\Omega}\leqslant Ch^2 \|u\|_{2,\Omega}.
\end{equation}
\end{thm}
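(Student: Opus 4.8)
The plan is to prove Theorem \ref{estimate_omega} by the same summation-over-elements strategy used for Theorem \ref{thm_global_estimate}, now drawing on the local line-partition estimate \eqref{thm_est_on_interf_elem_line} in place of the curve-partition estimate \eqref{thm_est_on_interf_elem}. First I would recall that every element of $\mathcal{T}_h$ is either a non-interface element, on which the classical estimate \eqref{noninterf_estimate} gives $\| I_{h,T}u-u\|_{0,T}+h| I_{h,T}u-u|_{1,T}\leqslant Ch^2|u|_{2,T}$, or an interface element, on which \eqref{thm_est_on_interf_elem_line} gives $\| I_{h,T}u-u\|_{0,T}+h| I_{h,T}u-u|_{1,T}\leqslant Ch^2(\|u\|_{2,T}+\|u\|_{1,6,T})$. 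Squaring these local bounds, summing over all $T\in\mathcal{T}_h^n$ and all $T\in\mathcal{T}_h^i$, and using that the semi-norms and norms are additive over the mesh (in the piecewise sense built into $\|\cdot\|_{k,\Omega}$), I obtain
\begin{equation*}
\| I_{h}u-u\|_{0,\Omega} + h| I_{h}u-u |_{1,\Omega}\leqslant Ch^2 \big(\|u\|_{2,\Omega}+\|u\|_{1,6,\Omega}\big).
\end{equation*}

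Next I would remove the $\|u\|_{1,6,\Omega}$ term, exactly as in the proof of Theorem \ref{thm_global_estimate}. The key ingredient is the interpolation/embedding inequality from \cite{RenWei1994}: for any $p\geqslant 2$, $\| u\|^2_{1,p,\Omega}\leqslant Cp\,\|u\|^2_{2,\Omega}$; applied with $p=6$ this gives $\|u\|_{1,6,\Omega}\leqslant C\|u\|_{2,\Omega}$ with an absolute constant. Substituting this into the displayed bound collapses the right-hand side to $Ch^2\|u\|_{2,\Omega}$, which is precisely \eqref{eq_estimate_omega_1}. I should be careful that the constant $C$ in \cite{RenWei1994} is independent of $h$ and of the interface location, which it is since it depends only on $\Omega$ and $p$; combined with the interface-location-independent constants in \eqref{noninterf_estimate}, \eqref{interface_element_estimate_line1}, \eqref{interface_element_estimate_line2} and \eqref{thm_est_on_interf_elem_line}, the final $C$ inherits the same independence.

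The only genuine work, and the place where I expect the main obstacle, is in justifying \eqref{thm_est_on_interf_elem_line} itself on a line-partition interface element, since that is where the geometry of the line approximation enters. One decomposes $T=\overline{T}^-\cup\overline{T}^+\cup\widetilde{T}\cup T_{int}$, where $\overline{T}^s=T_{non}^s\cap T^s_{line}$, $\widetilde{T}$ is the sliver between $\Gamma$ and $l$, and $T_{int}$ is the small tangential region. On $\overline{T}^s$ the interpolation error is controlled by \eqref{interface_element_estimate_line1}, which is obtained verbatim from the argument for Theorem \ref{thm_interface_element_estimate_curve} because $\overline{T}^s\subseteq T_{non}$ and the multi-edge expansions of Theorem \ref{thm_expan} together with Lemmas \ref{lemma_EF_integra_estimate} and \ref{reminder_estimate} apply unchanged. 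On $T_{int}$ the estimate \eqref{thm_est_Iint_eq_1} holds for the line-partition space by the same reasoning as Theorem \ref{thm_est_Iint}, using the $O(h^3)$ volume bound $|T_{int}|<Ch^3$. On $\widetilde{T}$ one uses $|\widetilde{T}|\leqslant Ch^3$ and the Hölder-type argument behind Theorem \ref{thm_interface_element_estimate_line2} to pay for the missing regularity with a power of $h$, yielding \eqref{interface_element_estimate_line2}. Adding these four pieces in the $L^2$ and $H^1$ semi-norms gives \eqref{thm_est_on_interf_elem_line}, and then the global summation and the $\|u\|_{1,6,\Omega}\leqslant C\|u\|_{2,\Omega}$ trick finish the theorem.
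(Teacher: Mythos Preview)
Your proposal is correct and follows essentially the same approach as the paper: the paper explicitly states that Theorem \ref{estimate_omega} is proved ``by the same arguments for Theorem \ref{thm_global_estimate},'' namely summing the local interface estimate \eqref{thm_est_on_interf_elem_line} and the non-interface estimate \eqref{noninterf_estimate} over all elements and then absorbing $\|u\|_{1,6,\Omega}$ via the Ren--Wei inequality. Your explanation of how \eqref{thm_est_on_interf_elem_line} arises from the decomposition $T=\overline{T}^-\cup\overline{T}^+\cup\widetilde{T}\cup T_{int}$ and the estimates \eqref{interface_element_estimate_line1}, \eqref{interface_element_estimate_line2}, \eqref{thm_est_Iint_eq_1} also matches the paper's derivation immediately preceding the theorem.
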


\noindent
{\bf Remark}: The estimate \eqref{eq_estimate_omega_1} is also derived in Theorem 3.12 of \cite{2013ZhangTHESIS} through an argument based on the interpolation error
bounds for the rotated-$Q_1$ IFE space with the Lagrange type degrees of freedom.

\section{Numerical Examples}

In this section we present some numerical results to demonstrate the features of IFE interpolation and IFE solutions for IFE spaces. The interface problem that we tested is the same as the one used in \cite{2015LinLinZhang}. Specifically, the solution domain is $\Omega = (-1,1)\times (-1,1)$ which is divided into two subdomains $\Omega^-$ and $\Omega^+$ by a circular interface $\Gamma$ with radius $r_0=\pi/6.28$ such that
$
\Omega^-=\{ (x,y): x^2+y^2\leqslant r_0^2 \}.
$
Functions $f$ and $g$ in \eqref{eq1_1} are given such that the exact solution to interface problem described by \eqref{eq1_1}-\eqref{eq1_4} is given by the following
formula:
\begin{equation}
\label{example_u}
u(x,y) =\left\{
\begin{aligned}
&\frac{1}{\beta^-}r^{\alpha},  &\; (x,y)\in \Omega^-,\\
&\frac{1}{\beta^+}r^{\alpha}+\left( \frac{1}{\beta^-} -\frac{1}{\beta^+} \right)r_0^{\alpha}, & \; (x,y) \in \Omega^+,
\end{aligned}
\right.
\end{equation}
where $r=\sqrt{x^2+y^2}$ and $\alpha=5$. 

In our numerical experiment, we construct IFE spaces by rotated-$Q_1$ polynomials defined with the actual curve interface, and the flux continuity \eqref{jump_cond_2} is enforced at the midpoint $F$ of the curve $\Gamma\cap T$ for constructing IFE basis functions. To avoid redundancy, we only present numerical result of relatively large coefficient jump, i.e, $\beta^-=1$, $\beta^+=10000$. Similar behavior are observed for the reverse of jump values $\beta^-=10000$ and $\beta^+=1$, and for some small coefficient jumps. 

Since IFE functions on each interface element $T \in \mathcal{T}_h^i$ are defined as piecewise rotated-$Q_1$ polynomials by two subelements sharing a curved boundary $\Gamma \cap T$, integrations over these curve subelements require
special attentions when assembling the local matrix and vector. These two subelements can be such that one is a curved triangle and other one is a curved pentagon,
or they are two curved quadrilaterals, all of them just have one curved edge. For the quadratures on the curved pentagon, we can partition it further into a straight edge triangle and a curved edge quadrilateral. Then we use the standard isoparametric mapping for integrations on curved triangles and quadrilaterals.

Table \ref{table:bilinearIterpolationError_1_10000} and Table \ref{table:bilinearIFESolutionError_SymPen1_10000} present interpolation errors $u - I_hu$ and Galerkin IFE solution errors $u - u_h$, respectively, in terms of the $L^2$ and the semi-$H^1$ norms generated over a sequence of meshes with size $h$ from 1/20 to 1/1280. The rates listed in these tables are estimated by the numerical results generated on two consecutive meshes.

Data in Table \ref{table:bilinearIterpolationError_1_10000} clearly shows the optimal convergence rate for the IFE interpolation. The IFE solution
$u_h$ in Table \ref{table:bilinearIFESolutionError_SymPen1_10000} is generated by the standard Galerkin formulation with a discrete
bilinear form \cite{2004LiLinLinRogers,2003LiLinWu} without any penalties on interface edges used in the partially penalized IFE methods in \cite{2015LinLinZhang}.
This means the IFE method used to generate data in Table \ref{table:bilinearIFESolutionError_SymPen1_10000} is simpler than the one discussed in \cite{2015LinLinZhang}.
The data in Table \ref{table:bilinearIFESolutionError_SymPen1_10000} demonstrates that the classic scheme using the nonconforming IFE spaces developed in this article
performs optimally. Finally, we refer readers to \cite{2015LinSheenZhang, 2013ZhangTHESIS} for numerical results generated with the nonconforming IFE spaces defined with the line approximation.



\begin{table}[H]
\begin{center}
\begin{tabular}{|c |c c|c c|}
\hline
$h$   & $\|u - I_hu\|_{0,\Omega}$ & rate   & $|u - I_hu|_{1,\Omega}$ & rate   \\ \hline
1/20  & 6.3804E-4                 &        & 2.7693E-2               &        \\ \hline
1/40  & 1.6776E-4                 & 1.9272 & 1.4436E-2               & 0.9399 \\ \hline
1/80  & 4.3557E-5                 & 1.9454 & 7.4385E-3               & 0.9566 \\ \hline
1/160  & 1.1100E-5                 & 1.9723 & 3.7803E-3               & 0.9765 \\ \hline
1/320 & 2.8083E-6                 & 1.9828 & 1.9060E-3               & 0.9880 \\ \hline
1/640 & 7.0568E-7                 & 1.9926 & 9.5704E-4               & 0.9939 \\ \hline
1/1280 & 1.7692E-7                 & 1.9959 & 4.7959E-4               & 0.9968 \\ \hline
\end{tabular}
\end{center}
\caption{Interpolation errors and rates for the rotated-$Q_1$ IFE function, $\beta^-=1$ and $\beta^+=10000$.}
\label{table:bilinearIterpolationError_1_10000}
\end{table}


\begin{table}[H]
\begin{center}
\begin{tabular}{| c |c c|c c|}\hline
 $h$   & $\|u-u_h\|_{0,\Omega}$ & rate   & $|u-u_h|_{1,\Omega}$    & rate    \\ \hline
1/20  & 1.4221E-3              &        & 2.8852E-2               &         \\ \hline
1/40  & 3.4863E-4              & 2.0283 & 1.4822E-2               & 0.9610  \\ \hline
1/80  & 8.5873E-5              & 2.0214 & 7.5721E-3               & 0.9689  \\ \hline
1/160  & 2.1046E-5              & 2.0286 & 3.8057E-3               & 0.9925  \\ \hline
1/320 & 5.7133E-6              & 1.8812 & 1.9154E-3               & 0.9905  \\ \hline
1/640 & 1.4044E-6              & 2.0243 & 9.5891E-4               & 0.9982  \\ \hline
1/1280 & 3.4603E-7              & 2.0210 & 4.8004E-4               & 0.9982  \\ \hline
\end{tabular}
\end{center}
\caption{Galerkin solution errors and rates for the rotated-$Q_1$ IFE solution, $\beta^-=1$, $\beta^+=10000$.}
\label{table:bilinearIFESolutionError_SymPen1_10000}
\end{table}

\bibliographystyle{abbrv}

\end{document}